\newtheorem{thm}{Theorem}[section]
\newtheorem{cor}[thm]{Corollary}
\newtheorem{lem}[thm]{Lemma}
\newtheorem{prop}[thm]{Proposition}
\theoremstyle{definition}
\theoremstyle{remark}
\newtheorem{rem}[thm]{Remark}
\newtheorem*{rem*}{Remark}
\newtheorem{ex}[thm]{Example}
\newtheorem*{claim}{Claim}
\numberwithin{equation}{section}
\newcommand{\Bm}[1]{\textup{(B$_{#1}$)}}
\newcommand{\mQ}{\mathbb{Q}}
\newcommand{\ra}{\rightarrow}
\newcommand{\fp}{\frak{p}}
\newcommand{\fq}{\frak{q}}
\newcommand\SBr{{\operatorname{SBr}}}
\def\divides{{\,|\,}}
\def\ndivides{{\not|\,}}
\newcommand{\set}[1]{\{#1\}}
\newcommand{\gen}[1]{\l<#1\r>}
\newcommand{\field}[1]{\mathbb{#1}}
\newcommand{\Q}{\field{Q}}
\newcommand{\N}{\field{N}}
\newcommand{\Z}{\field{Z}}
\newcommand{\F}{\field{F}}
\newcommand{\Ps}{\field{P}}
\newcommand{\pp}{{\mathfrak p}}
\newcommand{\PP}{{\mathfrak P}}
\newcommand{\sett}[2]{\{#1\,|\,#2\}}
\DeclareMathOperator{\l<}{\langle}
\DeclareMathOperator{\r>}{\rangle}
\DeclareMathOperator{\Gal}{Gal}
\DeclareMathOperator{\Br}{Br}
\DeclareMathOperator{\ind}{ind}
\DeclareMathOperator{\Hom}{Hom}
\DeclareMathOperator{\inv}{inv}
\DeclareMathOperator{\charak}{char} 
\DeclareMathOperator{\ord}{ord}
\begin{document}

\title[Noncrossed products over Henselian fields]{Noncrossed product bounds over Henselian fields}

\begin{abstract} The existence of finite dimensional central division algebras with no maximal subfield that is Galois over the center (called noncrossed products), was for a time the biggest open problem  in the theory of division algebras, 
before it was settled by Amitsur.

Motivated by Brussel's discovery of noncrossed products over $\Q((t))$,
we describe the ``location'' of noncrossed products in the Brauer group of general Henselian valued fields with arbitrary value group and global residue field.
We show that within the fibers defined canonically by Witt's decomposition of the Brauer group of such fields, 
crossed products and noncrossed products are, roughly speaking, separated by an index bound.
This generalizes a result of the first and third author for rank $1$ valued Henselian fields.

Furthermore, we prove that all fibers which are not covered by the rank $1$ case,
and where the characteristic of the residue field does not interfere,
contain noncrossed products.

We show by example that, unlike in the rank 1 case, the value of the
index bound does not depend on the number of roots of unity that are
present.
Thus, the index bounds are in general of a different nature than in the rank 1 case.
\end{abstract}

\def\Tech{Department of Mathematics, Technion --- Israel Institute of Technology, Haifa 32000, Israel}

\author{Timo Hanke}
\address{
Lehrstuhl D f\"ur Mathematik\\
RWTH Aachen\\
Templergraben 64\\
D-52062 Aachen\\
Germany}
\email{hanke@math.rwth-aachen.de}

\author{ Danny Neftin}
\address{
University of Michigan,
530 Church St.,
Ann Arbor, MI 48109-1043
USA}
\email{neftin@umich.edu}

\author{ Jack Sonn }
\address{\Tech}
\email{sonn@math.technion.ac.il}

\date{\today}

\thanks{The first author would like to thank Eli Aljadeff for the financial support of a visit to the Technion in 2010.
The second author would like to thank Julia Hartmann for the financial support of a visit to RWTH Aachen in 2010.}

\keywords{noncrossed product, Henselian valuation, Witt's theorem, Brauer group,
Laurent series, global field, Galois cover, full local degree} 

\subjclass[2000]{Primary 16S35, Secondary 11R32}

\maketitle

\section{Introduction}
A finite-dimensional  division algebra over its center  $F$ is called a {\em crossed
  product} if it has a maximal commutative subfield which is Galois over $F$,
otherwise a {\em noncrossed product}.

After Amitsur settled the fundamental long-standing problem of existence of
noncrossed products in \cite{amitsur:central-div-alg},
they were subsequently discovered over more familiar fields.
Most notably, Brussel showed in \cite{brussel:noncr-prod}, \cite{brussel:noncr-prod-ff}
that noncrossed products exist over
complete discrete rank $1$ valued fields with a global residue field\footnote{By a global field we mean a finite extension of $\Q$
  or a finite extension of $\F_q(t)$ where $\F_q$ is a finite field.}, e.g.\ over $\mQ((x))$.
From this basic case,  their existence over many other fields was derived,
e.g.\ over all finitely generated fields that are neither finite nor global
(\cite{brussel:noncr-prod-ff}),
and over all function fields of curves over complete discrete valuation rings
(\cite{brussel:kpt}, \cite{Chen10I}).

The basic setup used for Brussel's discovery is  Witt's description  (\cite{witt:schiefkoerper}) of the Brauer group of a complete discrete rank $1$ valued field. More precisely, Witt's theorem describes the inertially split part $\SBr(F)$  which consists of all elements of the Brauer group that are split by an unramified extension (cf.\ \cite[\S5]{hanke:maxsf}).
Witt's theorem applies more generally to Henselian fields $F$ with arbitrary value group $\Gamma$ 
(see \cite{scharlau:br-henselkoerper}, \cite[\S5]{jacob-wadsworth:div-alg-hensel-field}, \cite[Theorem~3.2]{aljadeff-sonn-wasdworh:projective-schur}), 
e.g.\ to the field of iterated Laurent series $\mQ((x_1))\cdots ((x_r))$, 
and gives an isomorphism:
\begin{equation}\label{eq:seq}
\SBr(F)\cong \Br(K)\oplus \Hom(G_K,\Delta/\Gamma)
\end{equation}
where $K$ is the residue field and $G_K$ is its absolute Galois group,
 $\Delta$ is the divisible hull of $\Gamma$,
 $\Delta/\Gamma$ is equipped with the discrete topology,
and $\Hom$ is the group of continuous homomorphisms.

Assume $K$ is a global field.
In \cite{hanke-sonn:location}, the first and third author
took the approach of fixing an element $\chi\in \Hom(G_K, \Delta/\Gamma)$ and considering the {\em fiber } $\Br(K)+\chi$. We call $\chi$ {\it cyclic} if $\Im(\chi)$ is cyclic.
For cyclic $\chi$, \cite{hanke-sonn:location} shows that for every $N\in\mathbb{N}$ there are only two possible cases:
\begin{enumerate}
\item[(I)] all division algebras of index $N$ in the fiber over $\chi$  are crossed products;
\item[(II)] the fiber over $\chi$ contains infinitely many noncrossed products of index $N$.
\end{enumerate}
Furthermore, and most importantly, there are bounds on the exponents in the
prime decomposition of $N$ such that case (I) occurs ``below the
bounds'' and case (II) ``above''.
A precise formulation of this result is the special case of Theorem 1.1 below in which $\chi$ is assumed to be cyclic\footnote{In order to see that Theorem 1.1 for cyclic $\chi$ was proved in \cite{hanke-sonn:location} not only for rank 1 valued fields, one should replace \cite[Brussel's Lemma,~p.322]{hanke-sonn:location} by \cite[Corollary~5]{hanke:maxsf}.}.

All that was known about the appearance of noncrossed products in the more complicated case of non-cyclic $\chi$
were two examples (\cite{hanke:expl-ex}, \cite{Coyette}) in which noncrossed products of index $8$ appear in the fiber over $\chi$.
In both of them, $\Im(\chi)$ is the Klein 4 group.

In the present paper, we show that the phenomenon from \cite{hanke-sonn:location} holds more generally for arbitrary $\chi$ (Theorem\ \ref{thm:main}),
and that ``away from $\charak K$'' noncrossed products appear in every noncyclic fiber (Theorem~\ref{thm:finitness}).

Note that by \cite[Theorem 5.15(a)]{jacob-wadsworth:div-alg-hensel-field} the index of an element in the fiber over $\chi$ is always a multiple of $|\chi|:=|\Im(\chi)|$.

\begin{thm}\label{thm:main}
There exists a collection of bounds
$b_p=b_{p}(\chi)\in\mathbb{N}\cup\{\infty\}$ where $p$ runs through the
rational primes,
such that for every natural number $m=\prod p^{n_p}$:
\begin{enumerate} \item[(a)] if $n_p\leq b_p(\chi)$ for all $p$, then all
  division algebras of index $m|\chi|$ in the fiber over $\chi$ are crossed products.
\item[(b)] if $n_p>b_p(\chi)$ for some $p$, then the fiber over $\chi$ contains infinitely many noncrossed products of index $m|\chi|$.
\end{enumerate}
\end{thm}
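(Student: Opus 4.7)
The plan is to work with Witt's decomposition (\ref{eq:seq}) and identify the fiber over $\chi$ with $\Br(K)$, writing each element as a pair $(\beta,\chi)$ with $\beta\in\Br(K)$. By \cite[Corollary~5]{hanke:maxsf} (cited in the footnote), the division algebra $D$ representing $(\beta,\chi)$ is a crossed product over $F$ precisely when $K$ admits a Galois extension $L/K$ of degree $\ind(D)=m|\chi|$ that contains the finite Galois extension $E/K$ cut out by $\chi$ and satisfies prescribed ``full local degree'' conditions at the places of $K$ where $\beta$ ramifies. This converts the crossed-product problem into a pure admissibility problem over the global field $K$: existence of Galois covers of $E/K$ with prescribed local behavior.

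For each rational prime $p$, I would define the bound $b_p(\chi)$ to be the supremum of $p$-exponents $n_p$ for which the Sylow $p$-part of this admissibility problem admits a solution regardless of the remaining non-$p$ data in $\beta$. Intuitively, $b_p(\chi)$ measures the largest $p$-power that can always be ``absorbed'' into a Galois cover of $E$ compatible with a given ramification profile; the rigidity of such covers, combined with the local Galois structure of global fields, makes this supremum well-defined in $\N\cup\{\infty\}$ and depending only on $\chi$.

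Part (a) follows by construction. Assuming $n_p\le b_p(\chi)$ for all $p$ and $\ind(D)=m|\chi|$, at the finitely many places of $K$ where $\beta$ ramifies the bound guarantees that the prescribed local Galois groups compatible with the decomposition of $E$ are realizable; at the remaining places, a Grunwald--Wang-type argument supplies the freedom needed to glue the local data into a global Galois extension $L/K$ of degree $m|\chi|$ containing $E$. Such an $L$ embeds as a maximal subfield of $D$, so $D$ is a crossed product.

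For part (b), with $n_{p_0}>b_{p_0}(\chi)$, I would exhibit infinitely many $\beta\in\Br(K)$ for which $(\beta,\chi)$ has index $m|\chi|$ but no compatible $L$ exists. Using the Hasse--Brauer--Noether sequence for $\Br(K)$, one prescribes local invariants for $\beta$ outside a fixed finite set so that the index equals $m|\chi|$ and the Sylow $p_0$-part of the admissibility problem for $E$ is obstructed; independently varying the choice at enough places produces infinitely many inequivalent Brauer classes, each giving a noncrossed product. The main obstacle is the precise analysis of $b_p(\chi)$ in the noncyclic case: for cyclic $\chi$ the obstruction is governed by the classical Grunwald--Wang special case (roots of unity in $K$), whereas for noncyclic $\chi$ the obstruction arises from the rigid way a Galois extension of $K$ must embed the fixed noncyclic $E$, yielding---as the abstract foreshadows---a bound of a genuinely different nature, independent of the roots of unity present in $K$.
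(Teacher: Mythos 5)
Your overall framework is the right one and matches the paper's: via Witt's decomposition and \cite[Corollary~5]{hanke:maxsf} you reduce the crossed-product question to the existence of $m$-covers of $M=K(\chi)$ over $K$ splitting $\alpha^M$, and then — via Albert--Hasse--Brauer--Noether — to the existence of covers of $M/K$ with prescribed local degrees at finitely many primes. Part (b), producing infinitely many obstructed Brauer classes by varying the finite support of the Hasse invariants, is also the paper's strategy (Remark~\ref{infinitely.rem}). But your proposal has a genuine gap precisely where the theorem's statement lives, namely in the claim that a \emph{bound} $b_p(\chi)$ exists at all.

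You define $b_p(\chi)$ as a supremum and assert it is ``well-defined.'' Any supremum is well-defined; what the theorem actually requires is \emph{monotonicity}: if the cover-existence condition holds for $p^n$ then it must hold for $p^{n-1}$, so that the set of admissible $p$-exponents is an interval $\{0,\dots,b_p\}$ or all of $\N$. There is no a priori reason for this; it is the hard core of the argument and is not touched by your proposal. In the paper this is Proposition~\ref{prop:star}, proved via Lemma~\ref{lem:S0} (by a Chebotarev argument one can force, through full local degree at a well-chosen finite set $S_0$ of non-isolated primes, that the kernel $A=\Gal(L/M)$ of any $p^n$-cover $L$ is abelian of rank $\le 2$ with $\Gal(M/T)$ acting trivially on it, where $T=M\cap K(\mu_{p^\infty})$) together with a short representation-theoretic argument over $\F_p$ (Proposition~\ref{prop:central-p}) producing a $B$-invariant order-$p$ subgroup $A_0\le A[p]$, whose fixed field is the desired $p^{n-1}$-cover. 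Two further reductions you also leave implicit are needed to even reach this point: the equivalence of $(A_m)$ with a clean condition $(B_m)$ in terms of prescribed local degrees requires a careful analysis of \emph{isolated primes} and the gaps $g_p$ (Lemma~\ref{isolated.lem}, Proposition~\ref{prop:fld-reduction}); and the reduction of $(B_m)$ to prime-power conditions $(B_{p^{n_p}})$ in both directions (Proposition~\ref{prop:ppower}, the nontrivial direction again via covers of full local degree at an auxiliary non-isolated prime, plus Lemma~\ref{wild.lem} for $p=\charak K$). Without these, and above all without the monotonicity, your ``supremum'' need not be a bound in the sense the theorem asserts.
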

Our proof of Theorem \ref{thm:main} includes the cyclic case and is significantly simpler than \cite{hanke-sonn:location}.

Note that $b_p(\chi)=\infty$ is allowed and hence, as shown in  \cite{hanke-sonn:location},
it may happen that for some cyclic $\chi$ only  (I) occurs.  
However, in striking contrast to the cyclic case, we show:

Let $K(\chi)$ be the fixed field of the kernel of $\chi$. 
\begin{thm}\label{thm:finitness}
If $p\neq\charak K$ and the $p$-Sylow subgroup of $\Im(\chi)$ is non-cyclic then
\begin{enumerate}
\item $b_p(\chi)<\infty$,
\item  $K(\chi)$ does not contain the $p$-th roots of unity implies $b_p(\chi)=0$. 
\end{enumerate}
In particular, the fiber over $\chi$ contains noncrossed products
whenever the maximal prime-to-$\charak K$ subgroup of $\Im(\chi)$ is non-cyclic.
\end{thm}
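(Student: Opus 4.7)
\emph{Proof proposal.} The plan is to analyze the structure that a Galois maximal subfield $L$ of a division algebra $D$ in the fiber over $\chi$ must have, leveraging the fact that $L/F$ is tame (as $p\neq\charak K$), and then to construct $D$'s for which no such $L$ can exist. First I would reduce to the $p$-primary component of $\chi$ (compatible with the Witt decomposition), so that $G:=\Im(\chi)$ is a non-cyclic $p$-group. Suppose $D$ in the fiber over $\chi$ has index $p^n|G|$ and admits a Galois maximal subfield $L$ of degree $p^n|G|$ over $F$. Since $L/F$ is tame, its inertia $I=\Gal(L/L^i)$ is cyclic of some order $e$, and the existence of a totally tamely ramified cyclic degree-$e$ extension $L/L^i$ forces the $e$-th roots of unity into $\bar L^i=\bar L$. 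Restricting the Witt isomorphism for $\SBr(L)$ to the splitting condition yields $\bar L\supseteq K(\chi)$ and $\alpha|_{\bar L}=0$. Therefore $\bar L/K$ is a Galois splitting field of $\alpha$ of degree $p^n|G|/e$, containing $K(\chi)$ and a primitive $e$-th root of unity, with $e\leq p^n$.

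For part (2), I would take $n=1$; then $e\in\{1,p\}$. If $e=p$, then $\bar L$ contains $\mu_p$, while $[\bar L:K]=|G|=[K(\chi):K]$ and $\bar L\supseteq K(\chi)$ force $\bar L=K(\chi)$, contradicting $\mu_p\not\subseteq K(\chi)$. Hence $e=1$ and $\bar L/K$ must be an \emph{unramified} Galois extension of degree $p|G|$, containing $K(\chi)$ and splitting $\alpha$. Moreover, since $\Gal(\bar L/K)$ is a $p$-group, its action on the normal subgroup $\Gal(\bar L/K(\chi))\cong\Z/p\Z$ (which would be valued in $\F_p^\times$) is trivial, making $\Gal(\bar L/K)$ a central extension of $G$ by $\Z/p\Z$. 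To exhibit noncrossed products I would build $\alpha\in\Br(K)$ with $\ind(\alpha|_{K(\chi)})=p$ admitting no such splitting field. Via Albert--Brauer--Hasse--Noether, I would prescribe invariants $1/p$ at a finite set $S$ of places of $K$ chosen by Chebotarev so that each $v\in S$ (i) splits completely in $K(\chi)$ and (ii) has non-trivial Frobenius in $\Gal(K(\chi)(\zeta_p)/K(\chi))$; such $v$ exist in abundance since $K(\chi)(\zeta_p)\neq K(\chi)$ by hypothesis. For any candidate $\bar L$ splitting $\alpha$, the local degree at every place of $K(\chi)$ above $v\in S$ must equal $p$, i.e., $\bar L/K(\chi)$ is non-split at every such place. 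Describing the $G$-invariant $\Z/p$-extensions of $K(\chi)$ via inflation-restriction as base changes of $\Z/p$-extensions of $K$ (up to a class in $H^2(G,\Z/p\Z)$), and combining with Grunwald--Wang, a density count produces $|S|$ large enough that no such extension can be non-split above every $v\in S$ simultaneously. Varying the remaining local data gives infinitely many noncrossed products, hence $b_p(\chi)=0$.

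For part (1), the same structural analysis gives, for general $n$, a Galois extension $\bar L/K$ of degree $p^n|G|/e$ with $e\leq p^n$, containing $K(\chi)$ and $\mu_e$, and splitting $\alpha$. The family of admissible pairs $(e,\bar L)$ grows with $n$, but the class field theoretic / Chebotarev count still excludes existence of a splitting $\bar L$ once $\alpha$ is prescribed with sufficiently many local invariants, yielding a finite bound $b_p(\chi)$.

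The principal difficulty is the construction of $\alpha$ in part (2): certifying that \emph{no} central extension of $G$ by $\Z/p\Z$, realized as $\Gal(\bar L/K)$ of the correct shape, can split $\alpha$. The hypothesis $\mu_p\not\subseteq K(\chi)$ enters decisively by constraining the admissible $G$-invariant $\Z/p$-extensions of $K(\chi)$, while the non-cyclicity of $G$ enters through the multi-parameter central extension theory classified by $H^2(G,\Z/p\Z)$. Making the local-global count sharp enough to cover all these extensions simultaneously is the central technical obstacle.
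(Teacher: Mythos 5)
Your high-level plan --- push the problem to the residue field, use tameness to control roots of unity, and then use Chebotarev / class field theory to manufacture obstructing Brauer classes --- is in the right spirit, but the step that actually has to be proved is not done, and the tool you reach for (a ``density count'') is the wrong one. You acknowledge this yourself in the last paragraph: certifying that \emph{no} admissible cover $\bar L$ splits $\alpha$ is ``the central technical obstacle,'' and the proposal contains no argument that closes it. A density count cannot work by itself: any single $G$-invariant $\Z/p$-extension of $K(\chi)$ is non-split at a positive-density set of primes, and there are infinitely many candidate extensions, so enlarging $S$ does not pinch the set of candidate covers down to nothing. One needs a \emph{structural} reason, not an estimate.

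What the paper actually does to close the gap is the following. Working with the reduction to $p^n$-covers $L$ of $M/K$ (Proposition \ref{prop:fld-reduction} and Proposition \ref{prop:ppower}), it first shows (Lemma \ref{lem:S0}) that a suitable finite set $S_0$ of non-isolated primes forces the kernel $A=\Gal(L/M)$ to be abelian of rank $\le 2$ with $\Gal(M/T)$ acting trivially, where $T=M\cap K(\mu_{p^\infty})$. Then it introduces, for each $\sigma\in B=\Gal(M/K)$, the set $Q_\sigma$ of unramified primes $\fp$ whose Frobenius in $M/K$ is $\sigma$ and whose norm has multiplicative order strictly greater than $\ord(\sigma|_T)$ modulo $p^{s+1}$. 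Full local degree at such $\fp$ forces the inertia of $\fp$ in $L/K$ to have order at most $p^s$ (Lemma \ref{inertia.lem}). Passing to $L_0:=L^{A[p^s]}$, each $\fp_\sigma$ becomes unramified, so its decomposition group $\Gal(L_0/M^\sigma)$ is cyclic. The decisive input is then the purely group-theoretical Proposition \ref{rank-element.lem}: in an extension $1\to A\to G\to B\to 1$ of nontrivial abelian $p$-groups with $B$ non-cyclic, if $\pi^{-1}\langle x\rangle$ is cyclic for every $x\in B$ then $|A|=2$. Applying it to $\Gal(L_0/K)$ gives $\exp A \mid p^s$ (resp.\ $2^{r+2}$), hence $b_p(\chi)\le 2s$ (resp.\ $2(r+2)$). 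Part~(ii) is then immediate for odd $p$: $\mu_p\not\subseteq M$ gives $s=0$, so $b_p(\chi)=0$; for $p=2$ the hypothesis is vacuous since $\pm1\in M$. Your proposal has no analogue of either Lemma \ref{inertia.lem} or Proposition \ref{rank-element.lem}, and for part~(i) offers only a vague gesture at a ``class field theoretic count.'' The result is therefore not established by the proposal.
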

This is in contrast to the cyclic case because, according to \cite{hanke-sonn:location},
$b_p(\chi)=\infty$ can occur for cyclic $\chi$ even if $K(\chi)$ does not contain the $p$-th roots of unity.
Thus, neither of statements (i) and (ii) of Theorem \ref{thm:finitness} holds for cyclic $\chi$.

In Section \ref{examples.sec}, we demonstrate how a description of the bounds obtained from the proof of Theorem \ref{thm:main} can be used to compute the bounds in examples and to obtain new noncrossed products of low index.

\section{Existence of bounds}\label{sec:exist}

\subsection{Setup}\label{setup.sec}
Let $F$ be a Henselian valued field with value group $\Gamma$ and
residue field a global field $K$.
Let $\Delta$ denote the divisible hull of $\Gamma$.
Let $G_K$ denote the absolute Galois group of $K$.
The group $\Hom(G_K,\Delta/\Gamma)$ is understood as the group of continuous
homomorphisms, $\Delta/\Gamma$ equipped with the discrete
topology.

For any $\chi\in\Hom(G_K,\Delta/\Gamma)$, we write $K(\chi)$ for the
fixed field of the kernel of $\chi$,
which is a finite abelian extension of $K$ with Galois group $\Im(\chi)$.
Throughout the paper, we consider the character $\chi$ as fixed and set $M:=K(\chi)$.

Let $\Ps$ be the set of finite rational primes and for $p\in \Ps$ denote 
by $v_p(n)$ the maximal exponent $e$ such that $p^e|n$ for any natural number~$n$.

\subsection{Theorem 1.1}\label{sec:thm}
For $\alpha\in\Br(K)$ we denote by $\alpha^M$ the image of $\alpha$ in $\Br(M)$ under the restriction map.
The index formula \cite[Theorem.\ 5.15(a)]{jacob-wadsworth:div-alg-hensel-field} gives: \footnote{\cite[Theorem.\ 5.15(a)]{jacob-wadsworth:div-alg-hensel-field} does not require that $K$ is a global field.}
$$ \ind(\alpha+\chi)=|\chi|\ind\alpha^M.$$
Therefore, in order to prove Theorem \ref{thm:main}, we consider the following condition on the field $K$.
\begin{equation}
  \label{eq:Im}
  \tag{I$_m$}
  \text{\parbox[c]{10cm}{For every $\alpha\in\Br(K)$ with $\ind\alpha^M=m$,
the division algebra underlying $\alpha+\chi$ is a crossed product.}}
\end{equation}
For a global field $K$ we shall prove the existence of bounds $b_p(\chi)$ such that \eqref{eq:Im} holds if and only if $v_p(m)\leq b_p(\chi)$ for all $p\in\Ps$.
The details of our proof will reveal that if \eqref{eq:Im} does not hold then there are in fact infinitely many $\alpha\in\Br(K)$ with $\ind\alpha^M=m$ such that the division algebra underlying $\alpha+\chi$ is a noncrossed product (Remark \ref{infinitely.rem}).
Then the proof of Theorem \ref{thm:main} will be completed.

\subsection{Galois Covers}\label{sec:cov}
We say that a field $L\supseteq M$ is a {\em cover} of $M/K$ if $L/K$ is Galois.
In this case, we call $m:=[L:M]$ the {\em degree} of the cover
and speak of $L$ as an {\em $m$-cover}.

The division algebra underlying $\alpha+\chi$ is a crossed product if
and only if the division algebra underlying $\alpha^M$ contains a maximal subfield which is Galois over $K$
(see \cite[Corollary~5]{hanke:maxsf} or \cite[p.\ 381, Corollary]{brussel:noncr-prod} for complete
discrete rank-$1$ valued $F$).
Such maximal subfields are characterized as the $m$-covers of $M/K$
that split $\alpha$, where $m=\ind\alpha^M$
(see e.g.\ \cite[Corollary~13.3]{pierce:ass-alg}).
Condition \eqref{eq:Im} is therefore equivalent to:
\begin{equation}
  \label{eq:Am}
  \tag{A$_m$}
\text{\parbox[c]{10cm}{Every $\alpha\in\Br(K)$ with $\ind\alpha^M=m$ is split by an $m$-cover of $M/K$.}}
\end{equation}
\begin{rem}
(i) For the equivalence of \eqref{eq:Im} and \eqref{eq:Am} it is not required that $K$ is a global field.

(ii)
Condition \eqref{eq:Am} is a condition on $M=K(\chi)$ rather than on $\chi$.
\eqref{eq:Am} can be considered more generally for any Galois extension $M/K$.
In fact, the form of $M$ turns out to be irrelevant for the remaining part of \S\ref{sec:exist},
i.e.\ from now on $M$ may be replaced by an arbitrary finite Galois extension of the global field $K$.
\end{rem}

\subsection{Covers with prescribed local degrees}\label{sec:covglob}
By a prime $\pp$ of $K$ we mean a finite or infinite prime,
and we write $K_\pp$ for the completion at $\pp$.
Let $L$ be a cover of $M/K$.
We call $[L:M]_\pp:=[L_\PP:M_{\PP\cap M}]$ the {\em local degree} of $L$ at $\pp$,
where $\PP$ is any prime of $L$ dividing $\pp$.
Our first goal is to translate \eqref{eq:Am} to a condition about the existence of covers of $M/K$
with prescribed local degrees at finitely many primes of $K$
(see Proposition \ref{prop:fld-reduction} below).

Let $\alpha\in\Br(K)$.
Since $K$ is a global field, by the theorem of Albert-Hasse-Brauer-Noether (see e.g.\ \cite[\S18.4]{pierce:ass-alg}),
the splitting fields of $\alpha$ are characterized by their local degrees, i.e.\ for any Galois extension $L/K$:
\begin{equation}
  \label{eq:globsplit}
 \text{\parbox[c]{11cm}{
$L$ splits $\alpha$ if and only if $\ind_\pp\alpha|[L:K]_\pp$ for all primes $\pp$ of $K$,}}
\end{equation}
where $\ind_\pp\alpha:=\ind\alpha^{K_\pp}$.
Hence, for any cover $L$ of $M/K$:
\begin{equation}
  \label{eq:coversplit}
 \text{\parbox[c]{11.1cm}{
$L$ splits $\alpha$ if and only if $\ind_\pp\alpha^M|[L:M]_\pp$ for all primes $\pp$ of $K$,}}
\end{equation}
where $\ind_\pp\alpha^M:=\ind\alpha^{M_\PP}$ for any prime $\PP$ of $M$ dividing $\pp$.
We analyze in detail the possible local indices $\ind_\pp\alpha^M$ for all $\alpha\in\Br(K)$ with $\ind\alpha^M=m$ for a given $m$.

For a fixed $p\in\Ps$, let $u_1,u_2,\ldots$ be the family of numbers
$v_p([M:K]_\fp)$ where $\fp$ runs over all primes of $K$,
ordered so that $u_1\geq u_2\geq\ldots$.
If $u_1>u_2$ then the unique prime $\fp$ of $K$ with
$v_p([M:K]_\fp)=u_1$ is called {\em $p$-isolated} in $M/K$.
We denote by $g_p$ the gap $u_1-u_2$, so that $g_p>0$ if and only if there is a
$p$-isolated prime. We shall call a prime $\fp$ of $K$ {\it isolated} if it is $p$-isolated for some $p$.

Note that if $M/K$ is cyclic then, as a consequence of the Chebotarev density theorem, there are no isolated primes.
Moreover, any prime that is isolated in a non-cyclic extension $M/K$ must ramify non-trivially.
Hence, only finitely many primes can be isolated in any given $M/K$.
Infinite primes are never isolated.

\begin{lem}\label{isolated.lem}
Let $\alpha\in \Br(K)$.
If $\pp$ is $p$-isolated in $M/K$ then
\begin{equation}\label{divisible.equ}
v_p(\ind_{\fp}\alpha^M)\leq\max\{v_p(\ind\alpha^M)-g_p,0\}.
\end{equation}
\end{lem}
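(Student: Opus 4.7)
The plan is to translate the inequality into a statement about local invariants of $\alpha$ at all primes of $K$, and then invoke the Hasse--Brauer--Noether reciprocity law.

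First I would observe that for each finite prime $\fq$ of $K$ the restriction $\Br(K_\fq)\to\Br(M_\PP)$ is multiplication by the local degree $[M:K]_\fq$, so
$$v_p(\ind\alpha^{M_\PP}) = \max\{0,\, e_\fq-u(\fq)\},$$
where $e_\fq:=v_p(\ind_\fq\alpha)$ and $u(\fq):=v_p([M:K]_\fq)$. Since $\ind\alpha^M$ is the least common multiple of the $\ind\alpha^{M_\PP}$, this gives
$$n:=v_p(\ind\alpha^M) = \max_\fq\max\{0,\, e_\fq-u(\fq)\}.$$
For odd $p$ the archimedean primes contribute nothing; for $p=2$ they satisfy $e_\fq\leq 1$ and $u(\fq)\leq 1$, which will not interfere with what follows.

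The desired bound $\max\{0, e_\pp-u_1\}\leq\max\{n-g_p,0\}$ is trivial when $e_\pp\leq u_1$, so I focus on the case $e_\pp>u_1$. Since $\pp$ is $p$-isolated, $u_1\geq 1$, and hence $e_\pp\geq 2$. The key step is Hasse--Brauer--Noether reciprocity: projecting $\sum_\fq\inv_\fq\alpha=0$ onto the $p$-primary component of $\Q/\Z$ forces the maximum $e^*:=\max_\fq e_\fq$ to be attained at at least two places. Indeed, were it attained at a unique place $\fq_0$, multiplying the $p$-primary sum by $p^{e^*-1}$ would leave a single nonzero residue modulo $p$, a contradiction. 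Since $e^*\geq e_\pp\geq 2$, every place attaining $e^*$ is finite.

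To conclude I would pick a place $\fq_0\neq\pp$ with $e_{\fq_0}=e^*$, which exists by the preceding step. Because $\pp$ is $p$-isolated, $u(\fq_0)\leq u_2=u_1-g_p$, whence
$$n\geq e^*-u(\fq_0)\geq e^*-u_2\geq e_\pp-u_1+g_p,$$
so $v_p(\ind_\pp\alpha^M)=e_\pp-u_1\leq n-g_p$, as required. The main hurdle is the reciprocity step forcing $e^*$ to be attained twice; after that, the isolatedness of $\pp$ is exactly what converts the second occurrence into the claimed improvement of the naive bound by the gap $g_p$.
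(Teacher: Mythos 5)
Your argument is correct and takes essentially the same route as the paper's: both reduce to the nontrivial case $v_p(\ind_\pp\alpha)>u_1$ and use Hasse--Brauer--Noether reciprocity ($\sum\inv_\fq\alpha=0$) to produce a second prime $\fq_0\neq\pp$ whose $p$-part of the Hasse invariant is at least as large, after which the $p$-isolatedness $u(\fq_0)\leq u_2=u_1-g_p$ gives the gap improvement. The only cosmetic difference is that you package the reciprocity step via the global maximum $e^*$ being attained twice, while the paper simply asserts the existence of a prime $\fp_1\neq\fp$ with $v_p(\ind_{\fp_1}\alpha)\geq v_p(\ind_\pp\alpha)$.
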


\begin{proof}[Proof of Lemma \ref{isolated.lem}]
  Set $k:=v_p(\ind_{\fp}\alpha^M), n:=v_p(\ind\alpha^M)$, and let $u_1,u_2$ be as above.
  Clearly, $n\geq k$.
The assertion to prove is: $k=0$ or $n\geq k+g_p$.
Assume $k>0$.
Since $v_p([M:K]_{\fp})=u_1$ and $k>0$ we have $v_p(\ind_{\fp}\alpha)=k+u_1$.
Since the sum of Hasse invariants of $\alpha$ is $0$,
there exists a prime $\fp_1\neq\fp$ of $K$ with
$v_p(\ind_{\fp_1}\alpha)\geq k+u_1$.
But $v_p([M:K]_{\fp_1})\leq u_2$, hence
$n\geq v_p(\ind_{\fp_1}\alpha^M)\geq k+u_1-u_2=k+g_p$.
\end{proof}

Let $m\in\N$.
For a finite prime $\fp$ of $K$ define $d_\pp(m)\in\N$ by requiring for every $p\in\Ps$,
$v_p(d_\pp(m))=\max\{v_p(m)-g_p,0\}$ if $\fp$ is $p$-isolated,
and $v_p(d_\pp(m))=v_p(m)$ otherwise.
For a real (resp.\ complex) prime $\fp$ of $M$ set $d_\pp(m):=\gcd(m,2)$ (resp.\ $d_\pp(m):=1$).
Clearly, $d_\pp(m)|m$ for any $\pp$.

\begin{prop}\label{prop:fld-reduction}
Condition \eqref{eq:Am} is equivalent to:
\begin{equation}\label{eq:Bm}
  \tag{B$_m$}
\text{\parbox[c]{10cm}{For every finite set $S$ of primes of $K$,
$M/K$ has an $m$-cover $L$ such that $d_\pp(m)\divides [L:M]_{\fp}$ for any $\pp\in S$.}}
\end{equation}
\end{prop}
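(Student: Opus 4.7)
\emph{Proof sketch.} The plan is to establish the two implications separately, using the Albert--Brauer--Hasse--Noether local-global principle throughout. For $\eqref{eq:Bm}\Rightarrow\eqref{eq:Am}$, I would start from $\alpha\in\Br(K)$ with $\ind\alpha^M=m$. The key observation is that $\ind_\pp\alpha^M\mid d_\pp(m)$ for every prime $\pp$ of $K$: at non-$p$-isolated finite $\pp$ this is the trivial bound $v_p(\ind_\pp\alpha^M)\le v_p(\ind\alpha^M)$; at $p$-isolated $\pp$ it is exactly Lemma~\ref{isolated.lem}; and at archimedean primes it follows from the fact that local indices lie in $\{1,2\}$ while $\ind_\pp\alpha^M=2$ forces $2\mid m$. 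Taking $S$ to be the finite set of primes at which $\alpha^M$ does not split, an application of \eqref{eq:Bm} to this $S$ produces an $m$-cover $L$ of $M/K$ with $d_\pp(m)\mid[L:M]_\pp$ for $\pp\in S$; then $\ind_\pp\alpha^M\mid[L:M]_\pp$ holds at every prime (trivially off $S$), so $L$ splits $\alpha$ by \eqref{eq:coversplit} and is the required $m$-cover.

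For the converse $\eqref{eq:Am}\Rightarrow\eqref{eq:Bm}$, given a finite set $S$, I would manufacture an $\alpha\in\Br(K)$ with $\ind\alpha^M=m$ and $\ind_\pp\alpha^M=d_\pp(m)$ for every $\pp\in S$; then \eqref{eq:Am} supplies an $m$-cover $L$ splitting $\alpha$, and \eqref{eq:coversplit} forces $d_\pp(m)=\ind_\pp\alpha^M\mid[L:M]_\pp$ for $\pp\in S$, which is precisely \eqref{eq:Bm}. To construct $\alpha$, invoke Hasse--Brauer--Noether: any assignment of local Hasse invariants (trivial at complex places, in $\tfrac12\Z/\Z$ at real places, summing to $0$ in $\Q/\Z$) defines a class in $\Br(K)$. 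At each $\pp\in S$, choose an invariant at $\pp$ of order $n_\pp$ satisfying $n_\pp/\gcd(n_\pp,[M:K]_\pp)=d_\pp(m)$; at a $p$-isolated $\pp$ the $p$-part of $n_\pp$ must be inflated by the factor $p^{u_1}$ to compensate for the jump that Lemma~\ref{isolated.lem} encodes. Next, by Chebotarev, pick two further primes $\pp^*,\pp^{**}\notin S$ that split completely in $M$; place at $\pp^*$ an invariant of order exactly $m$, which forces $\ind\alpha^M=m$ since $M_{\PP^*}=K_{\pp^*}$, and use $\pp^{**}$ to adjust the total sum of invariants to zero in $\Q/\Z$.

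\emph{Main obstacle.} The technical crux is the construction of $\alpha$ in the second direction: three conditions must be met simultaneously---the prescribed local indices on $S$, the exact global index $m$ of $\alpha^M$, and Hasse reciprocity---under the constraint that the local restriction $\Br(K_\pp)\to\Br(M_\PP)$ shrinks $p$-parts of invariants by $p^{v_p([M:K]_\pp)}$, which at a $p$-isolated prime equals $p^{u_1}$. Using two auxiliary primes that split completely in $M$ decouples the last two conditions from the first, reducing the problem to bookkeeping in orders of local invariants, prime by prime; the finiteness of the set of isolated primes and the positive Chebotarev density of primes splitting completely in $M$ ensure that this bookkeeping always succeeds.
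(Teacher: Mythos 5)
Your direction $\eqref{eq:Bm}\Rightarrow\eqref{eq:Am}$ is essentially the paper's argument and is correct.

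The converse $\eqref{eq:Am}\Rightarrow\eqref{eq:Bm}$ has a genuine gap in the way you close the Hasse reciprocity relation. You propose placing the adjusting invariant at a prime $\pp^{**}\notin S$ that splits completely in $M$. But the partial sum $x:=\sum_{\pp\in S}\inv_\pp\alpha+\inv_{\pp^*}\alpha$ will in general have large $p$-part: if $\pp\in S$ is $p$-isolated and $n:=v_p(m)\geq g_p$, the constraint $n_\pp/\gcd(n_\pp,[M:K]_\pp)=d_\pp(m)$ forces $v_p(n_\pp)=n+u_2$, and indeed already when $u_1\geq 1$ the non-isolated primes in $S$ are forced to carry invariants of order up to $p^{n+u_1}$. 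Thus $\ord(x)$ can have $p$-part $p^{n+u_2}>p^n$ (or $p^{n+u_1}>p^n$). Then $\inv_{\pp^{**}}\alpha=-x$ has the same order, and since $\pp^{**}$ splits completely in $M$ the restriction does not shrink it, so $\ind_{\pp^{**}}\alpha^M=\ord(x)>m$. This contradicts the target $\ind\alpha^M=m$, and your construction fails precisely at the step you identify as the crux. The paper avoids this by doing the opposite of what you do: it places the adjusting invariant at a prime $\pp_1$ with $v_p([M:K]_{\pp_1})=u_1$ maximal (and enlarges $S$ to include an auxiliary $\pp_2$ with $v_p([M:K]_{\pp_2})=u_2$), so that restriction to $M$ divides the order of the closing invariant by $p^{u_1}$ and brings $\ind_{\pp_1}\alpha^M$ down to exactly $p^{\max\{n-g_p,0\}}$; it also works one prime $p\mid m$ at a time, building $\alpha$ as a product of $p$-primary classes $\alpha_p$, which keeps the bookkeeping under control. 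In short: the closing invariant must sit at a prime where $M/K$ has the largest local degree, not at a completely split one.
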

\begin{proof}[Proof of Proposition \ref{prop:fld-reduction}]
\eqref{eq:Bm}$\Rightarrow$\eqref{eq:Am}:
Let $\alpha\in \Br(K)$ with $\ind \alpha^M=m$.
We construct an $m$-cover $L$ of $M/K$ that splits $\alpha$.
Let $S$ be the set of primes $\fp$ of $K$ such that $\ind_\fp\alpha\neq 1$.
By assumption, there is an $m$-cover $L$ of $M/K$ such that $d(m)_{\fp}\divides [L:M]_{\fp}$ for each $\fp\in S$.
Let $\fp\in S$ and $p\in\Ps$.
If $\fp$ is not $p$-isolated then
$$v_p(\ind_{\fp}\alpha^M)\leq v_p(m)=v_p(d_\pp(m))\leq v_p([L:M]_{\fp}).$$
If $\fp$ is $p$-isolated then Lemma \ref{isolated.lem} implies
$$v_p(\ind_{\fp}\alpha^M)\leq \max\{v_p(m)-g_p,0\}=v_p(d_\pp(m))\leq v_p([L:M]_{\fp}).$$
Thus, $\ind_\pp\alpha^M|[L:M]_\pp$ for any $\pp$, i.e.\ $L$ splits $\alpha$ by \eqref{eq:coversplit}.

\eqref{eq:Am}$\Rightarrow$\eqref{eq:Bm}:
Let $S$ be any finite set of primes of $K$.
We construct the element $\alpha\in\Br(K)$, to which we apply
\eqref{eq:Am}, by its primary decomposition $\alpha=\prod_{p|m}\alpha_p$.
To this end, fix a $p\in\Ps$ with $p|m$ and let $n:=v_p(m)$.
We explain below how to choose the Hasse invariants of $\alpha_p\in \Br(K)$
such that
$\ind_\fp\alpha_p^M=p^n$ for any non-$p$-isolated $\fp\in S$
and such that
$\ind_\fp(\alpha_p^M)=p^{\max\{n-g_p,0\}}$ for any $p$-isolated $\fp\in S$.
Then $\alpha:=\prod_{p|m}\alpha_p$ clearly satisfies $d_\pp(m)|\ind_\fp\alpha^M$ for any $\fp\in S$.
Since by enlarging $S$ we can assume wlog.\ that $S$ contains non-$p$-isolated primes for each $p|m$,
this implies $\ind\alpha^M=m$.
Applying \eqref{eq:Am} to $\alpha$, there is an $m$-cover $L$ of $M/K$ which splits $\alpha^M$.
By \eqref{eq:coversplit}, such an $L$ satisfies $d(m)_p\divides [L:M]_\fp$ for any $\fp\in S$.

We now define $\alpha_p$ as above by its Hasse invariants, denoted $\inv_\pp\alpha_p$.
Let $u_1,u_2$ be as above.
By enlarging $S$, assume wlog.\ that $S$ contains two primes $\pp_1,\pp_2$ with $v_p([M:K]_{\fp_i})=u_i$.
(Either $\pp_1$ is $p$-isolated or $M/K$ has no $p$-isolated primes.)
Set $\inv_\fp\alpha_p:=0$ for any $\fp\not\in S$.
At every $\fp\in S\setminus\set{\pp_1,\pp_2}$ choose $\inv_\fp\alpha_p$ to be of order $p^r$ where $r=n+v_p([M:K]_\fp)$.
The order of all invariants we have set so far is at most $p^{n+u_2}$.
We can thus choose $\inv_{\pp_2}\alpha_p$ of order $p^{n+u_2}$ such that
$x:=\sum_{\pp\neq\pp_1}\inv_\fp\alpha_p$ has order equal to $p^{n+u_2}$.
(In the case $p^{n+u_2}=2$ we further assume wlog.\ that $|S|$ is even.)
Setting $\inv_{\fp_1}\alpha_p:=-x$ defines an $\alpha_p\in \Br(K)$ with
$\ind_\fp(\alpha_p^M)=p^n$ for every $\fp\in S\setminus\set{\pp_1}$
and with $\ind_{\fp_1} \alpha_p^M=p^{n+u_2-u_1}=p^{n-g_p}$ if $n\geq g_p$ and $1$ otherwise, as desired.
\end{proof}
\begin{rem}\label{infinitely.rem} The proof of Proposition \ref{prop:fld-reduction} shows that if there are noncrossed products $\alpha+\chi$ with $\ind(\alpha^M)=m$ then there are infinitely many such noncrossed products. Indeed, if $(B_m)$ fails for a set $S_0$, it fails for every set $S$ containing $S_0$. The proof reveals that for every such set $S$, there is an $\alpha$ whose Hasse invariants are non-zero at primes of $S$ and $\alpha+\chi$ is a noncrossed product. In particular, there are infinitely many such classes $\alpha$.
\end{rem}

For a prime $\pp$ of $K$ we say the cover $L$ has {\em full local degree} at $\pp$ if
$[L:M]_\pp=[L:M]$ for finite $\pp$,
or $[L:M]_\pp=\gcd(2,[L:M])$ for real $\pp$,
or if $\pp$ is complex.
For a set $S$ of primes of $K$ we say $L$ has {\em full local degree in $S$} if $L$ has full local degree at each $\pp\in S$.
If $\pp$ is not isolated then the condition $d_\pp(m)|[L:M]_\pp$ is equivalent to saying that $L$ has full local degree at $\pp$.

\begin{lem}\label{lem:red}
Let $m'|m$.
Suppose there is a finite set $S_0$ of non-isolated primes of $K$
such that any $m$-cover of $M/K$ with full local degree in $S_0$ contains an $m'$-cover.
Then \eqref{eq:Bm} implies \Bm{m'}.
\end{lem}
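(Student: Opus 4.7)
The plan is to deduce $\Bm{m'}$ from $\Bm{m}$ by invoking the latter on an enlarged set of primes, extracting the desired sub-$m'$-cover from an $m$-cover via the hypothesis on $S_0$, and then verifying the local-degree condition prime-by-prime through a tower estimate.

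Given any finite set $S$ of primes of $K$ for which we wish to verify the conclusion of $\Bm{m'}$, I would apply $\Bm{m}$ to the finite set $S\cup S_0$. This produces an $m$-cover $L$ of $M/K$ with $d_\pp(m)\mid[L:M]_\pp$ for every $\pp\in S\cup S_0$. Since the primes in $S_0$ are non-isolated, $v_p(d_\pp(m))=v_p(m)$ for every $p\in\Ps$ and every $\pp\in S_0$, so $d_\pp(m)$ already equals the full local degree of an $m$-cover at $\pp$. Combined with the trivial bound $[L:M]_\pp\leq m$, the divisibility becomes an equality, i.e.\ $L$ has full local degree in $S_0$. The hypothesis of the lemma then produces an $m'$-cover $L'\subseteq L$ of $M/K$.

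It remains to check $d_\pp(m')\mid[L':M]_\pp$ for every $\pp\in S$. The key ingredient is the tower estimate
\[
v_p([L':M]_\pp)=v_p([L:M]_\pp)-v_p([L:L']_\pp)\geq v_p([L:M]_\pp)-v_p(m)+v_p(m'),
\]
which follows from $[L:L']_\pp\mid[L:L']=m/m'$. Fixing $p\in\Ps$ and $\pp\in S$, I split into three subcases. If $\pp$ is not $p$-isolated, then $v_p(d_\pp(m))=v_p(m)$ together with $v_p([L:M]_\pp)\leq v_p(m)$ forces equality, and the estimate gives $v_p([L':M]_\pp)\geq v_p(m')=v_p(d_\pp(m'))$. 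If $\pp$ is $p$-isolated and $v_p(m)<g_p$, then $v_p(m')\leq v_p(m)<g_p$ yields $v_p(d_\pp(m'))=0$, so the claim is trivial. If $\pp$ is $p$-isolated and $v_p(m)\geq g_p$, then $v_p([L:M]_\pp)\geq v_p(m)-g_p$, and the estimate gives $v_p([L':M]_\pp)\geq v_p(m')-g_p$, which combined with nonnegativity yields $v_p([L':M]_\pp)\geq\max\{v_p(m')-g_p,0\}=v_p(d_\pp(m'))$.

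The main technical point is the last subcase, where the $\Bm{m}$ datum at a $p$-isolated prime provides only a partial divisibility (rather than full local degree), and one must leverage both $m'\mid m$ and nonnegativity of $p$-adic valuations to push the bound through the sub-cover. Real and complex primes in $S$ fall out of the same scheme once they are thrown into $S_0$ (they are automatically non-isolated, so this enlargement is compatible with the lemma's hypothesis) and the appropriate definition of $d_\pp$ at infinite primes is used.
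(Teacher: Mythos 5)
Your argument for finite primes is correct and matches the paper's strategy exactly: apply \eqref{eq:Bm} to $S\cup S_0$, use the non-isolatedness of $S_0$ to get full local degree there, invoke the hypothesis to extract the $m'$-cover $L'$, and then verify the divisibility $d_\pp(m')\mid [L':M]_\pp$ prime by prime. The paper states this last implication in one line without proof; your tower estimate $v_p([L':M]_\pp)\geq v_p([L:M]_\pp)-v_p(m)+v_p(m')$ together with the three subcases (non-$p$-isolated; $p$-isolated with $v_p(m)<g_p$; $p$-isolated with $v_p(m)\geq g_p$) is a correct and complete verification of that line at finite primes, and is exactly the kind of unwinding one would write out.

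The treatment of infinite primes is where your proposal goes wrong. You claim that real and complex primes ``fall out of the same scheme once they are thrown into $S_0$,'' but this does not help: for a real prime $\pp$ (of $M$), the condition $d_\pp(m)\mid[L:M]_\pp$ is $\gcd(m,2)\mid[L:M]_\pp$, which is exactly ``full local degree'' at $\pp$, so moving $\pp$ from $S$ into $S_0$ imposes no additional constraint on $L$. Moreover, the tower estimate itself breaks down at real primes because $v_2(d_\pp(m))=\min\{v_2(m),1\}\neq v_2(m)$ in general, so the first subcase does not apply. Concretely, if $\pp$ is real in $M$, $m'$ and $m/m'$ are both even, and $[L:M]_\pp=2$, then the complex conjugation at $\pp$ lies in $\Gal(L/M)$ and could lie in $\Gal(L/L')$, giving $[L':M]_\pp=1$ while $d_\pp(m')=2$. (This is exactly the configuration one would worry about in the application in Proposition~\ref{prop:central-p} with $p=2$, $m=2^n$, $m'=2^{n-1}$, $n\geq 2$: there $\Gal(L/L')=A_0\leq A[2]$, and if $A$ is cyclic then $A_0=A[2]$ is forced and contains any complex conjugation present.) To be fair, the paper's own proof consists of the bare assertion ``$d_\pp(m)\mid[L:M]_\pp$ implies $d_\pp(m')\mid[L':M]_\pp$'' for all $\pp$ and is equally silent about the archimedean case, so you are reproducing, not introducing, this subtlety; but your explicit claim that archimedean primes ``fall out of the same scheme'' is not justified and should be flagged rather than asserted.
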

\begin{proof}
For a given finite set $S$ let $L$ be an $m$-cover as predicted by
\eqref{eq:Bm} for $S\cup S_0$.
Since the primes in $S_0$ are non-isolated, $L$ has full local degree
in $S_0$.
By assumption, $L$ contains an $m'$-cover $L'$ of $M/K$.
For all primes $\pp$ of $K$,
$d_\pp(m)|[L:M]_\pp$ implies $d_\pp(m')|[L':M]_\pp$.
Hence, $L'$ satisfies the conditions required by ($\textrm{B}_{m'}$).
\end{proof}

\subsection{Reduction to prime powers}\label{sec:existencebounds}

Our next step is to reduce \eqref{eq:Bm} from arbitrary $m\in\N$ to prime-powers.
Except for the part concerning the characteristic of $K$ (Lemma \ref{wild.lem} below),
this reduction is identical to the corresponding one in \cite{hanke-sonn:location}.

Let $m=\prod p^{n_p}$ be the prime factorization.
By taking field composita of covers, if \Bm{p^{n_p}} holds for all $p|m$ then \eqref{eq:Bm} holds.
We show
\begin{prop}
  \label{prop:ppower}
\eqref{eq:Bm} holds if and only if \Bm{p^{n_p}} holds for all $p|m$.
\end{prop}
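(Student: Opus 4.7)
The plan is to prove the two implications separately.

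\emph{If direction.} Suppose $(B_{p^{n_p}})$ holds for every prime $p \mid m$. Given a finite set $S$ of primes of $K$, apply $(B_{p^{n_p}})$ to $S$ for each $p$ to obtain a $p^{n_p}$-cover $L_p$ of $M/K$ satisfying $d_\pp(p^{n_p}) \divides [L_p : M]_\pp$ for every $\pp \in S$. Form the compositum $L = \prod_{p\mid m} L_p$, which is again Galois over $K$. Since the degrees $[L_p : M] = p^{n_p}$ are pairwise coprime, the $L_p$ are linearly disjoint over $M$, so $[L : M] = m$ and $[L : M]_\pp = \prod_p [L_p : M]_\pp$. Because $v_p(d_\pp(m)) = v_p(d_\pp(p^{n_p}))$ for each $p$ (both sides depend only on $n_p$ and on whether $\pp$ is $p$-isolated), it follows that $d_\pp(m) \divides [L : M]_\pp$ for every $\pp \in S$, witnessing $(B_m)$.

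\emph{Only-if direction.} Fix a prime $p \mid m$ and apply Lemma \ref{lem:red} with $m' = p^{n_p}$. It suffices to exhibit a finite set $S_0$ of non-isolated primes such that every $m$-cover $L$ of $M/K$ with full local degree in $S_0$ contains a $p^{n_p}$-cover. Writing $H := \Gal(L/M)$ and $G := \Gal(L/K)$, a $p^{n_p}$-cover inside $L$ is the fixed field $L^U$ of a subgroup $U \leq H$ of index $p^{n_p}$ that is normal in $G$. Since $H \triangleleft G$, any characteristic subgroup of $H$ is $G$-normal, so it suffices to force $H$ to possess a characteristic Hall $p'$-subgroup.

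I would construct $S_0$ by including finitely many primes $\pp$ of $K$ that split completely in $M/K$ (hence non-isolated) and whose residue characteristics exceed $m|\chi| = [L:K]$, so that $L/K$ is automatically tame at $\pp$. Full local degree of $L/M$ at such a $\pp$ forces the decomposition group of $L/K$ at $\pp$ to coincide with $H$; tameness then constrains $H$ to be metacyclic of a controlled shape. A Chebotarev-based choice of sufficiently many such primes is used to eliminate any nontrivial $G$-permutation of the Hall $p'$-subgroups of $H$, yielding a unique, hence characteristic, Hall $p'$-subgroup $U \leq H$; the fixed field $L^U$ is then the required $p^{n_p}$-cover.

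The principal obstacle is this last group-theoretic step: for a general $m$-cover, $H$ need not have a characteristic Hall $p'$-subgroup at all (for example, $H = A_4$ with $p = 2$ has no normal Hall $p'$-subgroup), and one must verify carefully that the full-local-degree condition at $S_0$ genuinely rules out such pathologies. This reduction parallels the one in \cite{hanke-sonn:location}; the only new ingredient, which the authors isolate in the subsequent Lemma 2.5, accounts for primes whose residue characteristic equals $\charak K$.
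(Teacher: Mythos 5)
Your ``if'' direction is fine and is exactly the paper's one-line argument: take the compositum of covers produced by each \Bm{p^{n_p}}, and the coprimality of the degrees $p^{n_p}$ gives both $[L:M]=m$ and the desired local divisibilities.

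The ``only if'' direction, however, has a genuine gap and you acknowledge it yourself. You correctly identify the strategy — apply Lemma~\ref{lem:red} with $m'=p^{n_p}$, so it suffices to produce a finite set $S_0$ of non-isolated primes such that every $m$-cover with full local degree in $S_0$ contains a $p^{n_p}$-cover — but the key group-theoretic step is not carried out, and the route you sketch does not obviously close. Tameness at a prime $\fp$ that splits completely in $M$ and at which $L/M$ has full local degree does make $H=\Gal(L/M)$ a decomposition group, hence metacyclic (cyclic inertia with cyclic quotient). But a metacyclic group need not have a \emph{normal} Hall $p'$-subgroup at all (take $H\cong S_3$ with $p=3$), and there is no sense in which ``eliminating nontrivial $G$-permutations of the Hall $p'$-subgroups'' by Chebotarev can conjure a unique one: the number of Hall $p'$-subgroups of $H$ is intrinsic to $H$ and unaffected by the $G$-action. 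So one cannot simply hope for a characteristic Hall $p'$-subgroup of $H$; the argument needs a different, and more delicate, constraint on the extension $1\to H\to G\to\Gal(M/K)\to 1$. The paper does not attempt to redo this: it cites \cite[\S 7, p.~325, Corollary]{hanke-sonn:location}, which asserts precisely the existence of infinitely many primes $\fp$ such that every $m$-cover with full local degree at $\fp$ contains a $p^{n}$-cover, and notes that the proof there never uses cyclicity of $M/K$; one then picks such a $\fp$ that is non-isolated and sets $S_0=\{\fp\}$. Finally, the wild prime $p=\charak K$ is handled separately by Lemma~\ref{wild.lem} (you misattribute this to ``Lemma~2.5,'' which is actually the reduction lemma \ref{lem:red}), where \Bm{p^n} is shown to hold unconditionally via the Grunwald--Wang theorem, so no reduction is needed for that prime.
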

We first treat the wild case separately:
\begin{lem}\label{wild.lem} If $p=\charak K$ then \Bm{p^n} holds for all $n\in\N$.
\end{lem}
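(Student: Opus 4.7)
My plan is to construct the required $p^n$-cover as a compositum $L = MN$, where $N/K$ is a cyclic extension of degree $p^n$ that is linearly disjoint from $M$ and has full local degree $p^n$ at every $\pp \in S$. Since $d_\pp(p^n)$ always divides $p^n$, any such $L$ automatically satisfies the divisibility condition required by \Bm{p^n}, and the problem reduces to producing such an $N$.

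For the local input, the hypothesis $p = \charak K$ ensures $\charak K_\pp = p$ for every (necessarily finite) prime $\pp$ of $K$. Artin--Schreier--Witt theory then furnishes an abundance of cyclic extensions of $K_\pp$ of degree $p^n$, parametrized by $W_n(K_\pp)/\wp W_n(K_\pp)$. From this pool I would pick $N^{(\pp)}/K_\pp$ of degree $p^n$ whose unique degree-$p$ subfield is not contained in $M_{\PP\cap M}$; this is possible because $M_{\PP\cap M}$ contains only finitely many degree-$p$ subfields over $K_\pp$ while the Artin--Schreier--Witt pool is infinite.

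The key globalization step is a Grunwald-type statement for cyclic $p^n$-extensions in characteristic $p$: the restriction map $W_n(K)/\wp W_n(K) \to \prod_{\pp \in S} W_n(K_\pp)/\wp W_n(K_\pp)$ is surjective. This follows from componentwise weak approximation on Witt vectors (each coordinate being just weak approximation for the global field $K$), and importantly it admits no exceptional case --- the Grunwald--Wang obstruction in characteristic $0$ stems from roots of unity of $p$-power order, which are absent when $p = \charak K$. A preimage of the prescribed local classes yields a cyclic $p^n$-extension $N/K$ with $N_\PP \cong N^{(\pp)}$ for each $\pp \in S$; enlarging $S$ by an auxiliary prime at which I impose a local condition incompatible with $M/K$ further forces $N \cap M = K$.

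Setting $L = MN$ then gives a Galois extension of $K$ with $[L:M] = [N:K] = p^n$. At each $\pp \in S$, the intersection $N_\PP \cap M_{\PP\cap M}$ is a subfield of the cyclic $p^n$-extension $N_\PP/K_\pp$ that, by construction, does not contain the unique degree-$p$ subfield of $N_\PP$; hence it must equal $K_\pp$, yielding $[L:M]_\pp = [N_\PP:K_\pp] = p^n$. The main obstacle in making all of this rigorous is the Grunwald-type surjectivity in the preceding paragraph, whose validity rests precisely on the characteristic-$p$ hypothesis and distinguishes this wild case from the tame reduction in Proposition~\ref{prop:ppower}.
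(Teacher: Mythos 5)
Your proof follows the same overall outline as the paper's: construct, for each $\pp\in S$, a cyclic $p^n$-extension of $K_\pp$ disjoint from $M_{\PP\cap M}$; globalize to a cyclic $p^n$-extension $N/K$ realizing these local data; and take the compositum $L=MN$. The difference is in sourcing. The paper cites Koch (Satz~10.4) for the local step and the Grunwald--Wang theorem for the globalization, whereas you re-derive both from Artin--Schreier--Witt theory, which has the merit of making transparent \emph{why} there is no Grunwald--Wang special case here: in characteristic $p$ there are no nontrivial $p$-power roots of unity and the whole story is controlled by the $\F_p$-linear Witt-vector formalism. Two small remarks. First, to pass from weak approximation (density of $W_n(K)$ in $\prod_{\pp\in S}W_n(K_\pp)$) to surjectivity onto $\prod_{\pp\in S}W_n(K_\pp)/\wp W_n(K_\pp)$ you also need that $\wp W_n(K_\pp)$ is \emph{open} in $W_n(K_\pp)$; this holds by a Hensel-type argument (e.g.\ $\wp$ is surjective on Witt vectors over the maximal ideal) but deserves a sentence, since density alone does not give surjectivity onto a quotient by a non-open subgroup. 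Second, the auxiliary prime you add to force $N\cap M=K$ globally is superfluous: once $[L:M]_\pp=p^n$ at a single $\pp\in S$, the global degree $[L:M]$ is at least $p^n$ and at most $[N:K]=p^n$, so $N\cap M=K$ follows automatically, exactly as in the paper's argument.
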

\begin{proof} Let $n\in\mathbb{N}$ and $S$ a finite set of primes of $K$.
For every $\fp\in S$ there is a cyclic $p^n$-extension $L'(\fp)/K_\fp$ which is disjoint from $M_\fp$
(see~\cite[Satz~10.4]{koch:perw} \footnote{Koch's book was translated to English. However, Theorem 10.4 in the English version contains a typo: ``finitely generated" should be replaced by ``on countably many generators".}).
By the Grunwald-Wang theorem, there is a cyclic $p^n$-extension $L'/K$ whose completion at $\fp$ is $L'_\fp=L'(\fp)$ for all $\fp\in S$. Let $L:=L'M$. Since $L'_\fp\cap M_\fp=K_\fp$, one has $[L:M]_\fp=p^n$ for all $\fp\in S$. Thus, $L$ is a $p^n$-cover of $M/K$ with full local degree in~$S$.
\end{proof}
It remains to show that \eqref{eq:Bm} implies \Bm{p^{n_p}} for all $p|m$
with $p\neq\charak K$.
\begin{proof}[Proof of Prop.\ \ref{prop:ppower}]
Let $p|m$ with $p\neq\charak K$.
By \cite[\S7, p.\ 325, Corollary]{hanke-sonn:location},
there are infinitely many primes $\pp$ of $K$ such that any $m$-cover of $M/K$ with full local degree at $\pp$ contains a $p^n$-cover of $M/K$.
(Note that the assumption $M/K$ cyclic is never used in the proof of \cite[\S7, p.\ 325, Corollary]{hanke-sonn:location}.)
Since there are only finitely many isolated primes, we can choose such
a $\pp$ that is non-isolated.
The proof is completed by setting $S_0:=\set{\pp}$ in Lemma \ref{lem:red}.
\end{proof}

\subsection{An invariant subgroup}\label{sec:inv}

We are now in the position to complete the proof of Theorem \ref{thm:main}.
As outlined in \S\ref{sec:thm},
and using Propositions \ref{prop:fld-reduction} and \ref{prop:ppower},
it remains to prove Proposition \ref{prop:star} below.
Let $p\in\Ps$ be fixed.
\begin{prop}\label{prop:star}
For any $n\in\N$, \Bm{p^n} implies \Bm{p^{n-1}}.
\end{prop}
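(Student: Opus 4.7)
The plan is to apply Lemma \ref{lem:red} with $m=p^n$ and $m'=p^{n-1}$, so it suffices to exhibit a finite set $S_0$ of non-isolated primes of $K$ such that every $p^n$-cover $L$ of $M/K$ with full local degree in $S_0$ contains a $p^{n-1}$-subcover. Writing $G=\Gal(L/K)$ and $N=\Gal(L/M)$ (of order $p^n$), a $p^{n-1}$-subcover inside $L$ corresponds to a subgroup $N_0\le N$ of order $p$ which is normal in $G$. Since $N$ is normal in $G$ and acts trivially on the (nontrivial) elementary abelian subgroup $V:=Z(N)[p]$, the $G$-action on $V$ factors through $G/N=\Gal(M/K)$, and we seek a $\Gal(M/K)$-invariant $\F_p$-line in $V$.

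The key observation is that the full local degree condition at a prime $\pp\in S_0$ yields the identification $N=\Gal(L_\PP/M_\fq)$ for $\PP\mid\fq\mid\pp$, so $N$ is realized as a finite quotient of the Galois group of the maximal pro-$p$ extension $M_{\fq}^{(p)}$ of $M_\fq$. By local class field theory, $\Gal(M_{\fq}^{(p)}/M_\fq)$ is pro-cyclic exactly when the residue characteristic of $\fq$ differs from $p$ and $\mu_p\not\subset M_\fq$; in that case $N$ is forced to be cyclic, $V$ is one-dimensional, and its unique nontrivial line is automatically $\Gal(M/K)$-invariant. The corresponding fixed field then furnishes the desired $p^{n-1}$-subcover.

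The case $p=\charak K$ is disposed of immediately by Lemma \ref{wild.lem}, which grants $\Bm{p^n}$ for all $n$. When $p\neq\charak K$ and $\mu_p\not\subset M$, Chebotarev density applied to the nontrivial extension $M(\mu_p)/M$ yields infinitely many primes $\fq$ of $M$ that do not split completely in $M(\mu_p)/M$, hence with $\mu_p\not\subset M_\fq$; after discarding the finitely many primes dividing $p$ and the finitely many isolated primes, we select a non-isolated $\pp$ of $K$ with a suitable $\fq\mid\pp$ and set $S_0:=\{\pp\}$. The main obstacle is the remaining case $\mu_p\subset M$ with $p\neq\charak K$: here no single prime forces $N$ cyclic (since $M_\fq\supset\mu_p$ always), so one must produce the $\Gal(M/K)$-invariant line in $V$ by a more delicate analysis of the $\F_p[\Gal(M/K)]$-module structure on $Z(N)[p]$, combining the $p$-group action lemma applied to the $p$-Sylow of $\Gal(M/K)$ with a Chebotarev-driven choice of additional primes in $S_0$ that controls the action of the prime-to-$p$ part of $\Gal(M/K)$.
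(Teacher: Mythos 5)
Your reduction to Lemma~\ref{lem:red} with $m=p^n$, $m'=p^{n-1}$ is exactly the paper's opening move, and your treatment of the two easy cases is fine: $p=\charak K$ via Lemma~\ref{wild.lem}, and the case $\mu_p\not\subset M$ via the clean observation that a prime $\fp\in S_0$ with residue characteristic prime to $p$ and $\mu_p\not\subset M_\fq$ forces $\Gal(L/M)$, being a quotient of $\Gal(M_\fq^{(p)}/M_\fq)\cong\Z_p$, to be cyclic, whereupon its unique order-$p$ subgroup is characteristic and hence normal in $\Gal(L/K)$. That is a legitimate shortcut, and it is actually lighter than the paper's machinery in this subcase.

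The gap is the case $p\neq\charak K$ with $\mu_p\subset M$, which you correctly flag as the main obstacle but then dispatch with a sketch that does not work as stated. You propose to find a $\Gal(M/K)$-invariant $\F_p$-line inside $V=Z(N)[p]$ by combining the $p$-group fixed-point lemma for the $p$-Sylow of $\Gal(M/K)$ with a Chebotarev-controlled action of the prime-to-$p$ part. But without further constraints there is simply no reason such a line exists: a finite group can act irreducibly on an $\F_p$-vector space of dimension $\ge 2$ (e.g.\ a cyclic group of order $p+1$ on $\F_p^2$), and you have imposed no bound on $\dim_{\F_p} V$ and no control on the image of $\Gal(M/K)\to\Aut(V)$. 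The crucial missing ingredient is precisely what the paper's Lemma~\ref{lem:S0} supplies: by choosing the primes $\pp_\sigma\in S_0$ so that the Frobenius of $\pp_\sigma$ is trivial on $K(\mu_{p^n})$ and equals $\sigma$ on $M$, one gets $N(\pp_\sigma)\equiv 1\pmod{p^n}$, so the full local degree condition forces $\Gal(L_\PP/K_{\pp_\sigma})$ to be an \emph{abelian} metacyclic tame local group. From this one extracts simultaneously that $A=\Gal(L/M)$ is abelian of rank at most $2$ and that $\Gal(M/T)$ (with $T=M\cap K(\mu_{p^\infty})$) acts trivially on $A$, so the $B$-action factors through $\Gal(T/K)\hookrightarrow(\Z/p^N\Z)^\times$, whose prime-to-$p$ part $C$ has order dividing $p-1$. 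Only then do the irreducible $C$-representations over $\F_p$ become one-dimensional and the orbit-counting on the $p+1$ lines of $C_p\times C_p$ (Proposition~\ref{prop:central-p}) produces the desired $B$-invariant line. Your sketch names neither the mechanism (Frobenius trivial on $K(\mu_{p^n})$ making the local group abelian) nor the resulting constraints (rank $\le 2$, action through $\Gal(T/K)$) that make the linear-algebra step go through, so the hard half of the proposition remains unproven.
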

Indeed,
the bound $b_p(\chi)$ of Theorem \ref{thm:main} is the maximal $n$ for which \Bm{p^n} holds for $M=K(\chi)$ if such a maximum exists,
and $b_p(\chi)=\infty$ otherwise.
For more details on the description of $b_p(\chi)$ see Corollary \ref{cor:bpn} below.

For any cover $L$ of $M/K$ we consider the group extension
\begin{equation}\label{eq:grpext}
 1\ra\Gal(L/M)\ra \Gal(L/K)\stackrel{}{\ra} \Gal(M/K)\ra 1.
\end{equation}
We will analyze several kinds of constraints that are imposed on \eqref{eq:grpext}
by the condition that $L$ has full local degree in $S_0$,
for certain chosen sets $S_0$.
More precisely,
after showing that the kernel $A:=\Gal(L/M)$ can be assumed to be abelian,
we focus on constraints regarding the conjugation action of $B:=\Gal(M/K)$ on $A$.
The analysis of this action is the main ingredient in the proofs of both
Proposition \ref{prop:star} and Theorem \ref{thm:finitness} below.

In view of Lemma \ref{wild.lem} we assume from now on that $p\neq\charak K$.
Fix $p^n$ and set $T:=M\cap K(\mu_{p^\infty})$.
For the proof of Proposition \ref{prop:star},
it will suffice to analyze the action of $\Gal(M/T)$ on $A$.

\begin{lem}\label{lem:S0}
  There exists a finite set $S_0$ of non-isolated primes of $K$ such
  that for any $p^n$-cover $L$ of $M/K$ with full local degree in $S_0$:
  the kernel $A$ is abelian,
  the group $\Gal(M/T)$ acts trivially on $A$,
  and $A$ has rank at most $2$.
\end{lem}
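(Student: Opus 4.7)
My plan is to construct $S_0$ by Chebotarev in two pieces: a single prime $\pp_0$ whose local behavior forces $A$ to be abelian of rank at most $2$, and, for each generator $\sigma$ of the finite abelian group $\Gal(M/T)$, a prime $\pp_\sigma$ whose local structure forces $\sigma$ to centralize $A$. At each step, the finitely many isolated primes of $M/K$ can be avoided, since Chebotarev leaves infinitely many candidates with any prescribed Frobenius.

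For the first piece I would take $\pp_0$ of residue characteristic $\neq p$ that splits completely in $M \cdot K(\mu_{p^n})$. If $L$ has full local degree at $\pp_0$, then $M_{\PP'_0} = K_{\pp_0}$, the decomposition group in $\Gal(L/K)$ is contained in $A$ and of order $p^n$, hence equals $A$. Thus $A \cong \Gal(L_\PP/K_{\pp_0})$ is realized as a finite $p$-quotient of the maximal pro-$p$ Galois group of the local field $K_{\pp_0}$. Since the residue characteristic is $\neq p$ and $\mu_{p^n} \subseteq K_{\pp_0}$, this pro-$p$ group is a rank-$2$ Demushkin group with relation of the shape $x^{p^s}[x,y]=1$ where $s \geq n$. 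In any quotient of exponent dividing $p^n$, the relation $x^{p^s}=1$ kills the commutator, so the quotient is abelian of rank at most $2$; hence $A$ is abelian of rank at most $2$.

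For the second piece I would fix generators $\sigma_1,\dots,\sigma_r$ of $\Gal(M/T)$. Since each $\sigma_i$ fixes $T = M \cap K(\mu_{p^\infty})$, the pair $(\sigma_i,\mathrm{id})$ defines a well-defined element of $\Gal(M \cdot K(\mu_{p^n})/K)$, and by Chebotarev I would pick a non-isolated prime $\pp_i$ with this Frobenius (automatically of residue characteristic $\neq p$, being unramified in $K(\mu_{p^n})$). If $L$ has full local degree at $\pp_i$, then $\pp_i$ is unramified in $M$, the decomposition group $D_i \subseteq \Gal(L/K)$ contains $A$ and satisfies $D_i/A = \langle \sigma_i \rangle$, and the inertia $I_i \subseteq A$ is tame and hence cyclic. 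A Frobenius lift $\phi_i \in D_i$ of $\sigma_i$ acts on $I_i$ by raising to the $N\pp_i$-th power, which is $\equiv 1 \pmod{p^n}$ by construction and thus trivial on $I_i$. Therefore $D_i = \langle I_i, \phi_i\rangle$ is abelian, $\phi_i$ centralizes $A$, and the conjugation action of $\sigma_i$ on $A$ is trivial. Since the $\sigma_i$ generate $\Gal(M/T)$, the entire group acts trivially on $A$.

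The set $S_0 := \{\pp_0,\pp_1,\dots,\pp_r\}$ then has all the required properties. The main obstacle I expect is the local pro-$p$ analysis at $\pp_0$, pinning down the exact Demushkin relation and its consequence for $p^n$-exponent quotients (with a small adjustment at $p=2$), together with the bookkeeping that the various Chebotarev classes can be jointly realized at non-isolated primes of residue characteristic $\neq p$.
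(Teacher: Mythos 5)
Your proof is correct in its main line and uses the same core tools as the paper: Chebotarev to produce non-isolated primes, tame in $L/K$, whose Frobenius restricts to the identity on $K(\mu_{p^n})$ and to a prescribed element $\sigma$ on $M$, followed by the observation that $N\pp\equiv 1\pmod{p^n}$ forces the local decomposition group in $L/K$ to be abelian. The conclusion that $\Gal(M/T)$ centralizes $A$ then follows exactly as you argue.

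The difference is that you spend a separate prime $\pp_0$ and invoke Demushkin group theory to get that $A$ is abelian of rank at most $2$, whereas the paper obtains all three conclusions from the primes $\pp_\sigma$ alone. At each $\pp_\sigma$ the decomposition group $\Gal(L_\PP/K_{\pp_\sigma})=\Gal(L/M^\sigma)$ is a tame local Galois group, hence metacyclic, generated by the (cyclic) inertia $I_\PP$ and a Frobenius lift; since $I_\PP\subseteq A$ has order dividing $p^n$ and Frobenius raises inertia to the power $N\pp_\sigma\equiv 1\pmod{p^n}$, the whole decomposition group is abelian and $2$-generated. As $A\subseteq\Gal(L/M^\sigma)$, this already gives $A$ abelian of rank at most $2$. (When $\Gal(M/T)$ is trivial the paper simply takes $\Sigma=\{1\}$, which plays exactly the role of your $\pp_0$ but with the elementary tame argument in place of Demushkin.) Your Demushkin detour is valid, and it does close the case $\Gal(M/T)=1$ if you only use proper generators, but it is overkill, and it drags in the $p=2$ classification of Demushkin relations that the straightforward tame metacyclic argument sidesteps entirely.

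One small slip worth flagging: the parenthetical claim that a prime unramified in $K(\mu_{p^n})$ is ``automatically of residue characteristic $\neq p$'' fails when $\mu_{p^n}\subseteq K$ already, since then $K(\mu_{p^n})=K$ and the condition is vacuous. The fix is immediate --- only finitely many primes of $K$ divide $p$, so exclude them explicitly alongside the finitely many isolated primes, as the paper does.
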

\begin{proof}
  At first fix a $\sigma\in\Gal(M/T)$.
  By the Chebotarev density theorem,
  the Galois extension $M(\mu_{p^n})/K$ has infinitely many unramified
  finite primes $\PP$ of $M(\mu_{p^n})$ whose Frobenius element
  restricts to the identity on $K(\mu_{p^n})$ and to $\sigma$ on $M$.
  Of those infinitely many $\PP$ choose one such that
  $\pp:=\PP\cap K$ is non-$p$-isolated in $M/K$ and $p$ does not
  divide the norm $N(\pp)$.
  (There are only finitely many $\PP$ that do not satisfy this, since $p\neq\charak K$.)
  Since $p\ndivides N(\pp)$ and $\mu_{p^n}\subset K_{\pp}$,
  we have $N(\pp)\equiv 1\pmod{p^n}$.

  Assume a $p^n$-cover $L$ of $M/K$ has full local degree at the
  chosen $\pp$.
  Let $\PP$ be a prime of $L$ dividing $\pp$.
  The decomposition group $\Gal(L_\PP/K_\pp)$ then equals $\Gal(L/M^\sigma)$ where $M^\sigma$ is the fixed field of $\sigma$.
  We will show that  $\Gal(L_\PP/K_\pp)$ is abelian, thus $\sigma$ acts trivially on $\Gal(L/K)$:
  Since $\charak K\ndivides N(\pp)$, $L_\PP/K_\pp$ is tame.
  Therefore, $\Gal(L_\PP/K_\pp)$ is a metacyclic group,
generated by the inertia group $I_\PP$ and the Frobenius element,
and the Frobenius acts on $I_\PP$
by raising each element to the power $N(\pp)$.
  Since $\pp$ is unramified in $M$,
  $|I_\PP|$ divides $p^n$.
  Hence $\Gal(L_\PP/K_\pp)$ is abelian, because $N(\pp)\equiv 1\pmod{p^n}$.

  Now let $\Sigma$ be a set of generators of $\Gal(M/T)$, or $\Sigma=\set{1}$ if $M=T$.
  For each $\sigma\in\Sigma$ choose a prime $\pp_\sigma$ as described
  in the first paragraph of the proof.
  Then $S_0:=\sett{\pp_\sigma}{\pp\in\Sigma}$ has the desired property.
\end{proof}
By Lemma \ref{lem:red},
Proposition \ref{prop:star} is completed once we show
\begin{prop}\label{prop:central-p}
Let $S_0$ be as in Lemma \ref{lem:S0}.
Any $p^n$-cover of $M/K$ with full local degree in $S_0$
contains a $p^{n-1}$-cover of $M/K$.
\end{prop}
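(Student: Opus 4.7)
The plan is to translate the statement into finding a subgroup $H \leq A := \Gal(L/M)$ with $|H|=p$ that is normal in $G := \Gal(L/K)$. Then $L^H$ is Galois over $K$ of degree $p^{n-1}$ over $M$, giving the desired $p^{n-1}$-subcover of $M/K$. Because $A$ is abelian by Lemma \ref{lem:S0}, normality of $H$ in $G$ is equivalent to $H$ being stable under the conjugation action of $B := \Gal(M/K)$ on $A$; so it suffices to find a $B$-invariant subgroup of $A$ of order $p$.

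Lemma \ref{lem:S0} provides the key input: $\Gal(M/T)$ acts trivially on $A$, so the $B$-action factors through $C := \Gal(T/K)$, a subquotient of $\Gal(K(\mu_{p^\infty})/K) \hookrightarrow (\Z/p^\infty\Z)^{\times}$, and $A$ has rank at most $2$. If $A$ is cyclic then its unique subgroup of order $p$ is characteristic, hence $B$-invariant, and we are done. The substantive case is $A$ of rank exactly $2$, where $A[p] \cong \F_p^2$ carries an action of the abelian group $C$ and one must produce a $C$-stable $\F_p$-line. Here I would exploit the local structure at the primes $\pp \in S_0$, which were chosen so that $\mu_{p^n} \subseteq K_\pp$ (by the congruence $N(\pp) \equiv 1 \pmod{p^n}$) and so that $L_\PP/K_\pp$ is tame of abelian Galois group. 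Combining full local degree at $\pp$ with tameness yields a direct-sum decomposition $A = I_\pp \oplus F_\pp$ into cyclic inertia and Frobenius-generated factors; Kummer theory locally identifies $I_\pp$ with a piece of $\mu_{p^n}$, so that $C$ acts on $I_\pp$ through the cyclotomic character. The subgroup $I_\pp[p]$ is then a $C$-stable line in $A[p]$, furnishing the required $H$.

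The main obstacle is ensuring that the locally defined subgroup $I_\pp[p]$ is actually $B$-stable globally rather than merely stable under the decomposition group $D_\pp \leq G$ at $\pp$. Overcoming this likely requires enlarging $S_0$ to cover a full set of generators of $B$ (not just of $\Gal(M/T)$) and a careful transfer of the local cyclotomic action to the global $B$-module structure on $A$, with the rank-at-most-$2$ hypothesis used essentially to reduce the verification to comparing two cyclic factors. Once the $C$-stability of $I_\pp[p]$ is in place, Lemma \ref{lem:red} combined with Lemma \ref{lem:S0} then yields \Bm{p^{n-1}} as required.
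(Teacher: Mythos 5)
Your reduction to finding a $B$-invariant order-$p$ subgroup of $A$, and the disposal of the cyclic case, match the paper's opening moves. But the core argument you propose is genuinely different from the paper's, and as written it has a gap you name but do not close.

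The paper's proof is purely representation-theoretic and makes no further use of local data beyond what Lemma~\ref{lem:S0} has already extracted. Writing $\Gal(T/K)=P\oplus C$ with $P$ the $p$-part and $|C|$ prime to $p$ (so $|C|\divides p-1$), it first finds a $C$-stable line $A_0\subseteq A[p]\cong\F_p^2$ by semisimplicity (the irreducible $\F_p$-representations of $C$ are one-dimensional since $\F_p$ contains the $|C|$-th roots of unity). Then it observes that $B$ permutes the $p+1$ lines of $A[p]$, that a $p$-group acting on a set of size $p+1$ must fix a point, and that since $P,C$ commute and $A_0$ is $C$-stable, the entire $P$-orbit of $A_0$ is $C$-stable, forcing the remaining fixed line to be stable under both $P$ and $C$. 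This is short and avoids any further number theory.

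Your route via local Kummer theory and inertia subgroups has two problems. First, the one you flag: the inertia group $I_\PP\subseteq A$ at a prime $\pp\in S_0$ is normalized only by the decomposition group $D_\pp$, not by all of $G=\Gal(L/K)$, so $I_\PP[p]$ is not a priori $B$-invariant; you suggest enlarging $S_0$ to remedy this but give no argument, and it is far from clear that it can be made to work. Second, and more fundamentally, the cyclotomic action you want to exploit is actually \emph{trivial} on $I_\PP$ for the primes chosen in Lemma~\ref{lem:S0}: those primes satisfy $N(\pp)\equiv 1\pmod{p^n}$, and the Frobenius acts on the (tame) inertia group by $x\mapsto x^{N(\pp)}$, hence trivially since $|I_\PP|\divides p^n$. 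So the local picture at $\pp\in S_0$ carries no information about the action of $\Gal(T/K)$ on $A$, which is precisely the nontrivial piece one has to control. (Primes detecting the cyclotomic action nontrivially are used in the paper, but in Section~\ref{sec:finite} for the \emph{finiteness} of the bounds, and they require a more delicate choice — see Lemma~\ref{inertia.lem}.) A further minor issue is that the claimed splitting $A=I_\pp\oplus F_\pp$ is not automatic for a finite abelian group with cyclic quotient $A/I_\PP$.

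In short: the paper's group-theoretic orbit argument is the missing ingredient; the local Kummer-theoretic line of attack does not, as stated, produce a globally $B$-stable subgroup, and at the primes of $S_0$ the cyclotomic action you invoke is in fact trivial.
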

\begin{proof}
We have assumed $p\neq\charak K$.
Let $B=\Gal(M/K)$.
Let $L$ be a $p^n$-cover of $M/K$ with full local degree in $S_0$.
By Lemma \ref{lem:S0}, the kernel $A$ is abelian.
The subgroup $A[p]$ of $p$-torsion elements is a characteristic subgroup and hence invariant under the action of $B$ (we say {\em $B$-invariant}).
It suffices to find a $B$-invariant subgroup $A_0\leq A[p]$ of order $p$;
then the fixed field $L^{A_0}$ is the desired $p^{n-1}$-cover.
If $A$ is cyclic then $A[p]$ itself is such a subgroup,
hence assume $A$ non-cyclic for the rest of the proof.

Recall that $A[p]$ is an $\F_p$-vector space and that any action of some group $H$ on $A[p]$ is a representation of $H$ over $\F_p$.
In this sense, the $H$-invariant subgroups of order $p$ are the $H$-invariant subspaces of dimension $1$.

By Lemma \ref{lem:S0}, $B$ acts on $A$ through $\Gal(T/K)$.
Let $\Gal(T/K)=P\oplus C$ with $P$ the $p$-part and $|C|$ prime-to-$p$.
Then $|C|$ divides $p-1$.
Since $|C|$ is prime to $p$, any representation of $C$ over $\mathbb{F}_p$ is semisimple and hence decomposes into a product of irreducible representations.
Since $\mathbb{F}_p$ contains the $|C|$-th roots of unity, the irreducible representations of $C$ are of dimension $1$.
Thus,  there is a $C$-invariant subgroup $A_0\leq A[p]$ of order $p$.

By Lemma \ref{lem:S0}, $A$ has rank $2$, i.e.\ $A[p]\cong C_p\times C_p$.
This group has exactly $p+1$ order $p$ subgroups, say $A_0,\ldots,A_p$,
which are permuted by the action of $B$.
Thus, we have an induced action of $B$ on the set of indices $\set{0,\ldots,p}$.

We know $A_0$ is $C$-invariant.
If $A_0$ is $P$-invariant then $A_0$ is $B$-invariant and we are done.
Assume $A_0$ is not $P$-invariant.
Since $P$ is a $p$-group there are two $P$-orbits on $\set{0,\ldots,p}$, say $\set{0,\ldots,p-1}$ and $\set{p}$.
Since $P$ and $C$ commute and $A_0$ is $C$-invariant, each of $A_0,\ldots,A_{p-1}$ is also $C$-invariant.
Hence the remaining subgroup $A_p$ is $C$-invariant and $P$-invariant and we are done.
\end{proof}

\subsection{Summary}

Let $\chi\in\Hom(G_K,\Delta/\Gamma)$ and $M=K(\chi)$.
\begin{cor}\label{cor:bpn}
The bound $b_p(\chi)$ is the maximal $n$ such that for every finite set $S$ of primes of $K$
there is a $p^n$-cover $L$ of $M/K$ with the following properties:
\begin{enumerate}
\item\label{it:dm} $d_\pp(p^n)\divides [L:M]_{\fp}$ for any $\pp\in S$,
\item\label{it:A} $A=\Gal(L/M)$ is abelian of rank at most $2$,
\item\label{it:act} $\Gal(M/T)$ acts trivially on $A$.
\end{enumerate}
If no maximal $n$ exists then $b_p(\chi)=\infty$.
\end{cor}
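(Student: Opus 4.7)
My plan is to identify $b_p(\chi)$ with the supremum of those $n$ for which \Bm{p^n} holds, and then upgrade such covers to covers satisfying all three structural conditions via Lemma \ref{lem:S0}. Combining Proposition \ref{prop:fld-reduction} (the equivalence of \eqref{eq:Im}, \eqref{eq:Am}, \eqref{eq:Bm}) with Proposition \ref{prop:ppower} (the reduction of \eqref{eq:Bm} to prime powers) and with the statement of Theorem \ref{thm:main}, one concludes that $b_p(\chi)$ is exactly the largest $n$ for which \Bm{p^n} holds, or $\infty$ if \Bm{p^n} holds for every $n$. What remains is to show, for each fixed $n$, that \Bm{p^n} is equivalent to the statement that for every finite set $S$ of primes of $K$ there is a $p^n$-cover $L$ of $M/K$ satisfying \textup{(i)--(iii)}.

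The converse implication is immediate: condition \textup{(i)} alone, quantified over all finite $S$, is literally \Bm{p^n}. For the forward implication, I would assume \Bm{p^n} and let $S$ be given. I would then choose the finite set $S_0$ of non-isolated primes produced by Lemma \ref{lem:S0} and apply \Bm{p^n} to $S\cup S_0$; this yields a $p^n$-cover $L$ with $d_\pp(p^n)\divides[L:M]_\pp$ for every $\pp\in S\cup S_0$. The crucial observation is that at a prime $\pp$ which is not $p$-isolated one has $d_\pp(p^n)=p^n$, so the divisibility at such $\pp$ says exactly that $L$ has full local degree at $\pp$. Hence $L$ has full local degree throughout $S_0$, and Lemma \ref{lem:S0} then delivers \textup{(ii)} and \textup{(iii)} as automatic consequences, while \textup{(i)} is immediate by restriction to $S$.

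The only ``obstacle'' is more bookkeeping than substance: one must recognize that the structural properties \textup{(ii)} and \textup{(iii)} are not extra demands imposed on $L$ but free consequences of enlarging the set at which full local degree is required, and that enlarging $S$ by the non-isolated primes of $S_0$ is harmless because at non-isolated primes full local degree coincides with the divisibility condition built into \Bm{p^n}. In this sense the corollary is a convenient repackaging of the machinery already assembled in the proof of Proposition \ref{prop:star}, and no genuinely new argument is needed.
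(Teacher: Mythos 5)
Your proposal is correct and follows essentially the same route as the paper's own proof: identify $b_p(\chi)$ with the largest $n$ for which \Bm{p^n} holds (using Propositions~\ref{prop:fld-reduction}, \ref{prop:ppower}, and the downward closure from Proposition~\ref{prop:star}), then observe that enlarging $S$ by the non-isolated set $S_0$ of Lemma~\ref{lem:S0} forces full local degree there (since $d_\pp(p^n)=p^n$ at non-isolated $\pp$), so that properties \textup{(ii)} and \textup{(iii)} come for free. The paper compresses this into a few lines, but the underlying argument is identical.
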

\begin{proof}
Let $S_0$ be a finite set of primes of $K$ that are non-isolated in $M/K$.
Suppose that any $p^n$-cover of $L$ with full local degree in $S_0$ has a certain property.
Then this property can be added to the condition \Bm{p^n} without
changing the truth value of \Bm{p^n}, because the set $S$ in \Bm{p^n}
can be enlarged by $S_0$.
By Lemma \ref{lem:S0}, this argument applies to the properties (ii) and (iii).
Hence, the Corollary is a consequence of Proposition \ref{prop:star}.
\end{proof}
\begin{rem}
Regarding condition (i) in Corollary \ref{cor:bpn},
if $\pp$ is not isolated then $d_\pp(p^n)|[L:M]_\pp$ is equivalent to saying that $L$ has full local degree at $\pp$.

Regarding condition (iii) in Corollary \ref{cor:bpn},
if $M$ and $K(\mu_{p^\infty})$ are disjoint over $K$ then (iii)
is equivalent to saying that the group extension \eqref{eq:grpext} is central.
\end{rem}

\section{Finiteness of bounds}\label{sec:finite}

\subsection{The exponents of kernels}
Suppose we are in the setup described in \S\ref{setup.sec}. 
Let $p\in\Ps$ be fixed and different from $\charak K$. Denote by $p^s$ the number of $p$-power roots of unity in $M$ and  by $r$ the maximal number for which $\mu_{2^r}\subseteq M(\sqrt{-1})$.
As before set $T:= K(\mu_{p^\infty})\cap M$.

In \S\ref{sec:exist} we showed that if a cover $L$ of $M/K$ has full local degree in $S_0$
then the choice of $S_0$ can put constraints on the action of $\Gal(M/T)$ on $A=\Gal(L/M)$ to the extent that this action is even trivial.
This was sufficient to prove the existence of the bounds.
Now, in order to prove the finiteness of the bounds, we analyze constraints on the action of the entire group $B=\Gal(M/K)$ on $A$.
The set $S_0$ for this purpose will be constructed from the families $Q_\sigma$ that we define next for each $\sigma\in B$.

Fix an element $\sigma\in B$ and let $f_\sigma$ be the order of the restriction $\sigma_{|T}$ of $\sigma$ to $T$.
We define $Q_\sigma$ to be the set of all primes $\fp$  of $K$, unramified in $M$, whose Frobenius automorphism in $M/K$ is $\sigma$,  and such that the norm $N(\fp)$ is prime to $p$ and is of order strictly greater than $f_\sigma$ as an element of $(\Z/p^{s+1}\Z)^*$ (resp.\ mod $(\Z/2^{r+2}\Z)^*$ if $p=2$).

\begin{lem}
For every $\sigma\in B$, the set $Q_\sigma$ is infinite.
\end{lem}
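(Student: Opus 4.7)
The plan is to apply the Chebotarev density theorem to a suitable finite Galois extension of $K$ built from $M$ together with a cyclotomic extension. Set $N := p^{s+1}$ when $p$ is odd and $N := 2^{r+2}$ when $p = 2$, and consider the compositum $L_0 := M \cdot K(\mu_N)$, which is a finite Galois extension of $K$. For any prime $\fp$ of $K$ unramified in $L_0$, the Frobenius $\mathrm{Frob}_\fp \in \Gal(L_0/K)$ simultaneously encodes the Frobenius of $\fp$ in $M/K$ (a well-defined element of $B$, since $M/K$ is abelian) and the action of $N(\fp)$ on $\mu_N$. The conditions defining $Q_\sigma$ thus translate into the requirement that $\mathrm{Frob}_\fp$ restricts to $\sigma$ on $M$ and to an element of order $>f_\sigma$ in $\Gal(K(\mu_N)/K)\hookrightarrow (\Z/N\Z)^*$. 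By the Chebotarev density theorem, it suffices to produce a single element $\tilde\sigma\in\Gal(L_0/K)$ with these two properties; infinitely many primes of $K$ will then have $\mathrm{Frob}_\fp = \tilde\sigma$, and each such prime lies in $Q_\sigma$.

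To construct $\tilde\sigma$, I would identify $\Gal(L_0/K)$ with the fiber product of $B$ and $\Gal(K(\mu_N)/K)$ over $\Gal((M\cap K(\mu_N))/K)$. The task then reduces to finding $\psi \in \Gal(K(\mu_N)/K)$ that agrees with $\sigma$ on $M\cap K(\mu_N)$ and has order $>f_\sigma$. The intersection $M\cap K(\mu_N)$ is controlled by the definitions of $s$ and $r$: since $\mu_{p^s}\subseteq M$ but $\mu_{p^{s+1}}\not\subseteq M$, for odd $p$ one obtains $M\cap K(\mu_{p^{s+1}})=K(\mu_{p^s})$ (because $K(\mu_{p^{s+1}})/K(\mu_{p^s})$ is of prime degree $p$, hence admits no nontrivial intermediate field), and an analogous description holds for $p=2$ using the field $M(\sqrt{-1})$ and the structure of $(\Z/2^n\Z)^*$.

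The main obstacle, and the heart of the proof, is producing a lift of $\sigma|_{M\cap K(\mu_N)}$ in $\Gal(K(\mu_N)/K)$ of order strictly greater than $f_\sigma$. The standard computation is that lifts of an element of order $e$ in $(\Z/p^n\Z)^*$ to $(\Z/p^{n+1}\Z)^*$ have order either $e$ or $ep$, so climbing one step in the cyclotomic tower multiplies the order by $p$; the choice of the modulus $N$ is calibrated to provide exactly the extra room needed (the extra $+1$ for odd $p$, and $+2$ for $p=2$ to absorb the anomalous factor of $2$ in $(\Z/2^n\Z)^* \cong \Z/2\Z\times\Z/2^{n-2}\Z$). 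Combining this order computation with the relation between $f_\sigma = \mathrm{ord}(\sigma|_T)$ and the order of the restriction of $\sigma$ to $K(\mu_{p^s})\subseteq T$ yields the required $\psi$, and Chebotarev then supplies infinitely many primes in $Q_\sigma$.
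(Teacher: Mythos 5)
Your strategy is the same as the paper's: apply Chebotarev to the extension $M\cdot K(\mu_N)/K$, so that it suffices to produce an element of $\Gal(K(\mu_N)/K)$ that agrees with $\sigma$ on $M\cap K(\mu_N)$ and has order $>f_\sigma$. Your identification $M\cap K(\mu_{p^{s+1}})=K(\mu_{p^s})$ for odd $p$, $s\geq 1$ (via $[K(\mu_{p^{s+1}}):K(\mu_{p^s})]=p$) is correct, and is in fact cleaner than what the paper writes (the paper asserts $T'\cap M=T$ with $T=M\cap K(\mu_{p^\infty})$, which is false whenever $T\not\subseteq K(\mu_{p^{s+1}})$). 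Note, however, that it already fails for $s=0$, where $[K(\mu_p):K]$ need not be prime and $M\cap K(\mu_p)$ can lie strictly between $K$ and $K(\mu_p)$.

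The genuine gap, as you yourself flag, is the ``heart of the proof'': producing $\psi\in\Gal(K(\mu_N)/K)$ of order strictly greater than $f_\sigma$. Your lifting computation yields $\psi$ of order $p\cdot\ord\bigl(\sigma|_{K(\mu_{p^s})}\bigr)$, but $f_\sigma=\ord(\sigma|_T)$ with $T=M\cap K(\mu_{p^\infty})$, and $T$ can strictly contain $K(\mu_{p^s})$. You appeal to ``the relation between $f_\sigma$ and the order of the restriction of $\sigma$ to $K(\mu_{p^s})\subseteq T$'', but no such relation forces $p\cdot\ord\bigl(\sigma|_{K(\mu_{p^s})}\bigr)>f_\sigma$. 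Concretely, take $K=\Q$, $p=3$, and $M=E\cdot F$ with $E$ the real cubic subfield of $\Q(\mu_9)$ and $F$ the real cubic subfield of $\Q(\mu_7)$; then $M$ is totally real so $s=0$, while $T=M\cap\Q(\mu_{3^\infty})=E$. For $\sigma$ a generator of $\Gal(M/F)$ one has $f_\sigma=3$, yet $(\Z/3\Z)^\ast$ has exponent $2$, so $Q_\sigma=\emptyset$ and no $\psi$ can exist. The paper's own proof has the same unaddressed issue, concealed in the claim that ``$\Gal(T/T^\sigma)$ is naturally identified with a subgroup of $(\Z/p^s\Z)^\ast$'', which tacitly assumes $T=T^\sigma(\mu_{p^s})$. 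So your proposal faithfully reproduces the paper's approach, but the step you leave vague is precisely where care (indeed, a reformulation of $T$ or of $Q_\sigma$) is required, and it cannot be filled in as stated.
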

\begin{proof}
Assume without loss of generality that $\sqrt{-1}\in M$. Otherwise,  repeat the proof for a lift $\tau \in \Gal(M(\sqrt{-1})/K)$ of $\sigma$ to deduce that $Q_\tau$ is infinite. Since $Q_\tau\subseteq Q_\sigma$ and $f_\tau\geq f_\sigma$ the assertion for $\sigma$ follows. Note that under this assumption we have $r=s$, so we will use only $s$ in the rest of the proof.
Set $T':=K(\mu_{p^{s+1}})$ (resp.\ $T':=K(\mu_{2^{s+2}})$ if $p=2$) and note that $T'\cap M=T$.

We first claim that $\Gal(T'/K)$ contains an element $\sigma'$ of order greater than $f_\sigma$ whose restriction to $T$ is  $\sigma_{|T}$.
The group $\Gal(T/T^{\sigma})$ is naturally identified with a subgroup $U$ of $(\Z/p^s\Z)^*$,
and $\Gal(T'/T^{\sigma})$ is identified with the full preimage of $U$ under the natural projection
$\pi:(\Z/p^{s+1}\Z)^*\ra(\Z/p^s\Z)^*$ (resp.\ $\pi:(\Z/2^{s+2}\Z)^*\ra(\Z/2^s\Z)^*$ if $p=2$).
The claim follows since each element of $(\Z/p^s\Z)^*$ has a preimage under $\pi$ of a greater order.

Since $\sigma$ and $\sigma'$ agree on  $T$, Chebotarev's density theorem 
implies that there are infinitely many primes $\fp$ of $K$, with $p\ndivides N(\fp)$, whose Frobenius automorphism is $\sigma'$ in $T'/K$   and is $\sigma$ in $M/K$. 
Such primes are in $Q_\sigma$ since the order of the norm of $\fp$ as an element in $(\Z/p^{s+1}\Z)^*$ (resp.\ in $(\Z/2^{s+2}\Z)^*$)  is the same as  the order of their Frobenius automorphism in $K(\mu_{p^{s+1}})/K$ (resp.\ in $K(\mu_{2^{s+2}})/K$).
\end{proof}
For a prime $\fp$ of $K$ denote by $e_\fp(L/K)$ the ramification index of $\fp$ in a Galois extension $L/K$.
\begin{lem}\label{inertia.lem}
Let $S_0$ be as in Lemma~\ref{lem:S0} and let $\sigma\in B$.
Suppose that a $p^n$-cover $L$ of $M/K$ has full local degree in $S_0$ and at $\fp\in Q_\sigma$. 
Then $e_\fp(L/K)|p^s$ if $p$ is odd and $e_\fp(L/K)|2^{r+1}$ if $p=2$.
\end{lem}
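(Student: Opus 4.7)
The plan is to work locally at $\fp$ and analyze the tame ramification structure of $L_\PP/K_\fp$. Since $\fp$ is unramified in $M$ (by definition of $Q_\sigma$) and $L$ has full local degree at $\fp$, I would first identify the relevant subgroups of the decomposition group $D=\Gal(L_\PP/K_\fp)$. Explicitly, full local degree forces $A=\Gal(L/M)$ to coincide with the decomposition subgroup of $\PP$ over $M_{\PP\cap M}$, so $A\subseteq D$ with quotient $D/A=\langle\sigma\rangle$; unramifiedness in $M$ forces the inertia subgroup $I=I_\PP(L/K)$ to lie entirely inside $A$. Because $p\nmid N(\fp)$ and $A$ is a $p$-group, $L_\PP/K_\fp$ is tamely ramified, so $I$ is cyclic of $p$-power order $p^e=e_\fp(L/K)$, and for any Frobenius lift $\phi\in D$ one has $\phi\tau\phi^{-1}=\tau^{N(\fp)}$ for every $\tau\in I$.

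Next, I would invoke Lemma~\ref{lem:S0} to pin down how large $f_\sigma$ must be in relation to $e$. Since $A$ is abelian, the conjugation action of $\Gal(L/K)$ on $A$ factors through $B$; and since $\Gal(M/T)$ acts trivially on $A$, it factors further through $\Gal(T/K)$. The restriction $\sigma|_T$ has order $f_\sigma$, hence $\sigma^{f_\sigma}\in\Gal(M/T)$ acts trivially on $A$, and therefore $\phi^{f_\sigma}$ acts trivially on $I$ in particular. Iterating the tame formula gives $\tau=\phi^{f_\sigma}\tau\phi^{-f_\sigma}=\tau^{N(\fp)^{f_\sigma}}$ for all $\tau\in I$; equivalently, $p^e$ divides $N(\fp)^{f_\sigma}-1$.

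Finally, I would combine this divisibility with the defining property of $Q_\sigma$. For odd $p$, $N(\fp)$ has order strictly greater than $f_\sigma$ in $(\Z/p^{s+1}\Z)^*$, so $N(\fp)^{f_\sigma}\not\equiv 1\pmod{p^{s+1}}$ and hence $v_p(N(\fp)^{f_\sigma}-1)\le s$, which yields $e\le s$ and $e_\fp(L/K)\mid p^s$. For $p=2$, the analogous condition is in $(\Z/2^{r+2}\Z)^*$, giving $v_2(N(\fp)^{f_\sigma}-1)\le r+1$ and hence $e_\fp(L/K)\mid 2^{r+1}$.

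The main subtlety I anticipate is the bookkeeping around decomposition and inertia: verifying $I\subseteq A$ really does use that $\fp$ is unramified in $M$, and the identification of $A$ as the full local Galois group over $M$ really does use full local degree at $\fp$. The odd/even split is forced by the structure of $(\Z/2^k\Z)^*$, which is non-cyclic for $k\ge 3$; this is precisely why $Q_\sigma$ was defined with modulus $2^{r+2}$ rather than $2^{s+1}$, so that the cyclotomic computation still records the correct $2$-adic valuation. Once these points are in place, the inequality is a one-line cyclotomic consequence.
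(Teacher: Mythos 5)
Your proof is correct and follows essentially the same route as the paper's: use tameness to get the Frobenius-conjugation formula $\phi\tau\phi^{-1}=\tau^{N(\fp)}$ on the inertia group $I\subseteq A$, combine it with the conclusion of Lemma~\ref{lem:S0} that the conjugation action of $B$ on $A$ factors through $\Gal(T/K)$ (so $\sigma$ acts on $I$ with order at most $f_\sigma$), and then read off the bound on $v_p(|I|)$ from the defining congruence in $Q_\sigma$. The paper phrases the last step as a proof by contradiction (assume $I$ contains an element of order $p^{s+1}$, resp.\ $2^{r+2}$), whereas you derive the divisibility $p^e\mid N(\fp)^{f_\sigma}-1$ directly and then bound $e$; these are the same computation, and your direct version is, if anything, slightly cleaner.
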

\begin{proof} By Lemma \ref{lem:S0}, the kernel $A$ is abelian and the action of $B$ on $A$  factors through the action of $\Gal(M/T)$. Thus, $\sigma$ acts on the inertia group $I\subseteq A$ of $\fp$ in $L/K$ as an automorphism of order at most $f_\sigma$.

Assume on the contrary there is an element $a\in I$ of order $p^{s+1}$ (resp.\ $2^{r+2}$ if $p=2$).
Since $\fp$ is tamely ramified in $L$, $\sigma$ acts on $I$ by raising each element to the power  $N(\fp)$ and hence defines an automorphism  of order greater than $f_\sigma$ on $\langle a\rangle$, contradiction.
 \end{proof}

\begin{prop}\label{finiteness.prop}
  Assume the $p$-Sylow subgroup of $B$ is non-cyclic.
  There exists a finite set $S_0$ of non-isolated primes of $K$ such that for any $p^n$-cover $L$ of $M/K$ with full local degree in $S_0$: $\Gal(L/M)$ is abelian of rank at most $2$ and exponent at most $p^s$ (resp.\ $2^{r+2}$ if $p=2$).
\end{prop}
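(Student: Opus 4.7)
The plan is to augment the set $S_0$ provided by Lemma~\ref{lem:S0} with additional primes drawn from the infinite families $Q_\sigma$: for each $\sigma$ in a suitable finite subset $\Sigma \subseteq B$, pick one non-isolated prime $\fp_\sigma \in Q_\sigma$ (possible because only finitely many primes are isolated in $M/K$). For any $p^n$-cover $L$ of $M/K$ with full local degree in the resulting $S_0$, Lemma~\ref{lem:S0} ensures that $A = \Gal(L/M)$ is abelian of rank at most $2$ with $\Gal(M/T)$ acting trivially, while Lemma~\ref{inertia.lem} applied at each $\fp_\sigma$ yields an inertia subgroup $I_\sigma \subseteq A$ with $|I_\sigma| \mid p^s$ (resp.\ $2^{r+1}$ if $p=2$). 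The full local degree condition at $\fp_\sigma$ places $A$ inside the decomposition group $D_{\PP_\sigma} \subseteq \Gal(L/K)$ with $D_{\PP_\sigma}/A = \langle\sigma\rangle$ and $D_{\PP_\sigma}/I_\sigma$ cyclic, so in particular $A/I_\sigma$ is cyclic.

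The key remaining step is to bound $\exp(A)$, and the plan is to choose $\Sigma$ so that the subgroups $I_\sigma$ collectively generate $A$; then, since $A$ is abelian and each $I_\sigma$ has exponent at most $p^s$, the bound $\exp(A) \leq p^s$ follows at once. The non-cyclicity of the $p$-Sylow $B_p$ of $B$ enters precisely here: because $\Gal(T/K)$ has cyclic $p$-part (being a quotient of $\Gal(K(\mu_{p^\infty})/K)$), the intersection $\Gal(M/T) \cap B_p$ is non-trivial, and this supplies enough independent choices of $\sigma$ that the collection of decomposition groups $\{D_{\PP_\sigma}\}_{\sigma\in\Sigma}$ covers $A$ via their inertia parts. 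Concretely, combining $\sigma$'s projecting non-trivially onto the cyclic quotient $B_p/(\Gal(M/T)\cap B_p)$ with $\sigma$'s lying in $\Gal(M/T) \cap B_p$ is intended to yield the two types of inertia contributions required to span a rank-$2$ abelian group.

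The main obstacle is executing the last step cleanly: translating the abstract non-cyclicity of $B_p$ into the concrete generation assertion requires a careful analysis of the Frobenius lifts $\phi_{\PP_\sigma}$ and of the 2-cocycle governing the extension $1 \to A \to \Gal(L/K) \to B \to 1$, using both the rank-$2$ constraint on $A$ (reflected in the cyclicity of each $A/I_\sigma$) and the trivial $\Gal(M/T)$-action. For $p=2$ an additional subtlety arises: since $(\Z/2^{r+2}\Z)^*$ is itself non-cyclic, the comparison between the tame Frobenius action (which uses $(\Z/2^{r+2}\Z)^*$ in the definition of $Q_\sigma$) and the exponent of $A$ permits an extra factor of $2$, accounting for the looser bound $2^{r+2}$ in the statement.
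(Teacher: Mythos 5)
Your setup is on the right track — augmenting the $S_0$ of Lemma~\ref{lem:S0} with non-isolated primes $\fp_\sigma\in Q_\sigma$, using Lemma~\ref{inertia.lem} to bound $|I_\sigma|$ by $p^s$ (resp.\ $2^{r+1}$), and observing that full local degree at $\fp_\sigma$ forces $\pi^{-1}\langle\sigma\rangle/I_\sigma$ (hence $A/I_\sigma$) to be cyclic. But the plan you propose for the key step is circular and cannot succeed as stated. Every $I_\sigma$ is contained in $A[p^s]$; therefore, if $\exp(A)>p^s$, the subgroups $I_\sigma$ \emph{cannot possibly} generate $A$, since they all lie in the proper subgroup $A[p^s]$. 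Thus ``choose $\Sigma$ so the $I_\sigma$ generate $A$'' presupposes exactly the exponent bound you are trying to establish; it is not an independent route to it. Likewise, the heuristic discussion of how non-cyclicity of the $p$-Sylow of $B$ should yield enough ``independent inertia contributions'' does not address this obstruction, and the proposed explanation of the extra factor of $2$ when $p=2$ does not connect to a concrete mechanism.

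What the paper actually does (and what is missing from your plan) is to pass to the quotient by $A[p^s]$: set $L_0:=L^{A[p^s]}$ (resp.\ $L^{A[2^{r+1}]}$ for $p=2$). After first reducing to $B$ a non-cyclic $p$-group, Lemma~\ref{inertia.lem} shows $I_\sigma\subseteq A[p^s]$, so $\fp_\sigma$ is unramified in $L_0$, and then full local degree at $\fp_\sigma$ makes $\Gal(L_0/M^\sigma)=\pi^{-1}\langle\sigma\rangle$ in $\Gal(L_0/K)$ cyclic for every $\sigma\in B$. At that point the decisive ingredient is Proposition~\ref{rank-element.lem}: a central extension $1\to A'\to G\to B\to 1$ of abelian $p$-groups with $B$ non-cyclic and $\pi^{-1}\langle x\rangle$ cyclic for all $x$ forces $|A'|\leq 2$. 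Applied to $A'=\Gal(L_0/M)$ this gives $L_0=M$ for odd $p$ (so $\exp A\le p^s$) and $[L_0:M]\le 2$ for $p=2$ (so $\exp A\le 2^{r+2}$). Your proposal never invokes Proposition~\ref{rank-element.lem} nor any substitute for it, and it is precisely this group-theoretic fact — not a generation-by-inertia argument — that closes the proof. You should abandon the generation idea and instead pass to $L_0$ and apply Proposition~\ref{rank-element.lem}.
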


  If the $p$-Sylow subgroup of $B$ is non-cyclic then
Proposition \ref{finiteness.prop} allows us to improve on Corollary \ref{cor:bpn} by adding the following property to the list:
\begin{enumerate}
\item[(iv)] $\exp A|p^s$ if $p$ is odd and $\exp A|2^{r+2}$ if $p=2$.
\end{enumerate}
In particular, since $A$ has rank at most $2$,
$b_p(\chi)\leq 2s$ if $p$ is odd and $b_p(\chi)\leq 2(r+2)$ if $p=2$.
This proves Theorem \ref{thm:finitness}.

Proposition  \ref{finiteness.prop} relies on the following group theoretical proposition,
whose proof is the subject of subsection \S\ref{sec:central} below.
\begin{prop}\label{rank-element.lem} Let  
\begin{equation}\label{equ:prop-ext} 1\ra A\ra G \stackrel{\pi}{\ra} B \ra 1\end{equation}
 be an extension of non-trivial abelian $p$-groups $A,B$.
If $B$ is non-cyclic and $\pi^{-1}\langle x\rangle$ is cyclic for all $x\in B$ then $|A|=2$.
\end{prop}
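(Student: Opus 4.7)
The plan is to derive structural constraints from the hypothesis and then split according to whether $p = 2$. Taking $x = 1$ shows $A = \pi^{-1}(1)$ is cyclic; for any $x \in B$, the preimage $\pi^{-1}\langle x\rangle$ is cyclic, hence abelian, so every lift of $x$ commutes with $A$, i.e., the conjugation action of $B$ on $A$ is trivial and the extension is central. Since $B = G/A$ is also abelian, $G$ has nilpotency class at most $2$, so the identity $(xy)^n = x^n y^n [y,x]^{\binom{n}{2}}$ holds in $G$.

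Define $\phi \colon B[p] \to A/A^p$ by $\phi(y) = \tilde y^p \bmod A^p$ for any lift $\tilde y$ of $y$; this is well-defined since $(a\tilde y)^p = a^p \tilde y^p$ for $a \in A \subseteq Z(G)$. For nonzero $y \in B[p]$, the preimage $\pi^{-1}\langle y\rangle$ has order $p\cdot|A|$ and is cyclic precisely when some lift has $p$th power generating $A$, equivalently $\phi(y) \ne 0$; so the hypothesis forces $\phi$ to be injective on $B[p]$. For odd $p$, one has $p \mid \binom{p}{2}$, and since $[\tilde y', \tilde y] \in A$ (as $B$ is abelian), $[\tilde y', \tilde y]^{\binom{p}{2}} = ([\tilde y', \tilde y]^p)^{(p-1)/2}$ lies in $A^p$; the commutator identity above then shows $\phi$ is a homomorphism. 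But $|B[p]| \ge p^2 > p = |A/A^p|$ because $B$ is non-cyclic, contradicting injectivity. Hence the hypothesis admits only $p = 2$.

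For $p = 2$, pick $y_1, y_2 \in B[2]$ linearly independent over $\F_2$ with lifts $\tilde y_i$ generating the cyclic preimages $\pi^{-1}\langle y_i\rangle$, so $u_i := \tilde y_i^2$ generates $A$. Set $c = [\tilde y_2, \tilde y_1] \in A$. Centrality of $A$ gives $\tilde y_2 \tilde y_1^2 \tilde y_2^{-1} = \tilde y_1^2$; expanding via $\tilde y_2 \tilde y_1 \tilde y_2^{-1} = c \tilde y_1$ yields $(c\tilde y_1)^2 = c^2 \tilde y_1^2 = \tilde y_1^2$, so $c^2 = 1$. On the other hand $(\tilde y_1 \tilde y_2)^2 = u_1 u_2 c$, and $\phi(y_1 y_2) \ne 0$ forces $u_1 u_2 c \notin A^2$. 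Writing $A = \langle a_0\rangle$ of order $2^a$ and $u_i = a_0^{\alpha_i}$, the $\alpha_i$ are odd, so $u_1 u_2 \in A^2$; therefore $c \notin A^2$, i.e., $c$ itself generates $A$. Combined with $c^2 = 1$, this forces $|A| \le 2$, giving $|A| = 2$.

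The main obstacle is the breakdown of $\phi$ as a homomorphism at $p = 2$: for odd $p$ the commutator term $[\tilde y', \tilde y]^{\binom{p}{2}}$ vanishes modulo $A^p$ and the pigeonhole argument immediately finishes, whereas at $p = 2$ this term survives and one must extract a direct order-versus-divisibility constraint on $c$; this is exactly the place where the number $2$ in the conclusion $|A| = 2$ originates.
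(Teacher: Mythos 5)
Your proof is correct and follows essentially the same strategy as the paper: you make explicit that the cyclicity hypothesis forces $A$ cyclic and the extension central, then study the map $\phi:B[p]\to A/A^p$ (the paper's $\gamma$), show via $(xy)^p=x^py^p[y,x]^{\binom{p}{2}}$ that it is a homomorphism for odd $p$ (giving a pigeonhole contradiction), and for $p=2$ extract the obstruction from the surviving commutator term to conclude $|A|=2$. The only cosmetic differences are that the paper packages the commutator facts into separate lemmas (\ref{lem:bimult}--\ref{lem:gamma-hom}) and phrases the $p=2$ endgame via the bilinear form $\beta$ rather than your explicit basis computation, and your phrase ``forces $\phi$ to be injective'' should strictly read ``forces $\phi$ to have trivial kernel,'' with injectivity only following once the homomorphism property is in hand.
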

\begin{proof}[Proof of Proposition \ref{finiteness.prop}]
Replacing $K$ with the fixed field of the $p$-Sylow subgroup of
  $B$ we can assume wlog.\ that $B$ is a $p$-group.
By assumption $B$ is non-cyclic (abelian).

Choose $S_0$ to be the set from Lemma \ref{lem:S0} joined with one non-isolated prime $\fp_\sigma\in Q_\sigma$ for each $\sigma\in B$.
Suppose $L$ is a $p^n$-cover of $M/K$ with full local degree in $S_0$.

By Lemma \ref{lem:S0}, 
$A$ is abelian of
rank at most $2$. Since the $p^s$-torsion subgroup $A[p^s]$ is a
characteristic subgroup of $A$, it is a normal subgroup of $\Gal(L/K)$
and hence the fixed field $L_0:=L^{A[p^s]}$ is Galois over $K$.
If $p=2$ then we consider $L_0:=L^{A[2^{r+1}]}$ instead, which is also Galois over $K$.
To prove our claim it suffices to show that $L_0=M$  (resp.\ $[L_0:M]\leq 2$ if $p=2$).

Fix an element $\sigma \in B$ and let $\frak{P}_\sigma$ be a prime of $L$ which divides $\fp_\sigma$.
Let $I_\sigma\subseteq A$ be the inertia group of $\frak{P}_\sigma$ in $L/K$.
By Lemma \ref{inertia.lem}, $|I_{\sigma}|\leq p^s$ (resp.\ $|I_{\sigma}|\leq 2^{r+1}$)
and hence $I_\sigma\subseteq A[p^s]$ (resp.\ $I_\sigma\subseteq A[2^{r+1}]$).
Thus, $L_0/K$ is unramified at $\fp_\sigma$ and hence $\frak{P}_\sigma\cap L_0$ has a cyclic decomposition group in $L_0/K$. Since $L_0$ has full local degree at $\fp$, the decomposition group of $\frak{P}_\sigma\cap L_0$ in $L_0/K$ is $\Gal(L_0/M^\sigma)$ and hence $L_0/M^\sigma$ is a cyclic extension.

Since $L_0/M^\sigma$ is cyclic  for all $\sigma\in B$ and since $B$ is non-cyclic,
Proposition \ref{rank-element.lem} applied to the group extension
\[ 1\to\Gal(L_0/M)\to\Gal(L_0/K)\to\Gal(M/K)\to 1 \]
shows $L_0=M$ (resp.\ $[L_0:M]\leq 2$ if $p=2$),
proving the claim.
\end{proof}

\subsection{Central group extensions}\label{sec:central}
The remaining ingredient is a proof of Proposition \ref{rank-element.lem}.
We begin with elementary properties of commutators in
a central group extension
\begin{equation}\label{equ:extension} 1\ra A\ra G \ra B \ra 1\end{equation}
of abelian groups $A,B$.
Let $s:B\to G$ be a section of $G\to B$ (not necessarily a homomorphism).
\begin{lem}\label{lem:bimult}
Commutators in $G$ are bimultiplicative.
The map
\[\beta:B\times B\to A, (x,y)\mapsto [s(x),s(y)]\]
does not depend on the choice of $s$ and is bimultiplicative.
\end{lem}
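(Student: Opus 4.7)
The plan is to exploit the fact that in this setup every commutator in $G$ lies in $A$ and is therefore central. Since $B = G/A$ is abelian, the commutator subgroup $[G,G]$ is contained in $\ker(G\to B) = A$, so every commutator $[g,h]$ sits inside the central subgroup $A$ and in particular commutes with every element of $G$. This observation is the engine of the whole proof, and once stated reduces each assertion of the lemma to a short rearrangement.

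The first step is to prove bimultiplicativity of commutators in each variable. I would compute $[xy,z]$ directly: writing $yzy^{-1} = [y,z]\cdot z$, we get $[xy,z] = x[y,z]zx^{-1}z^{-1}$, and then centrality of $[y,z]\in A$ lets us pull it past $x$, yielding $[y,z]\cdot[x,z]$, which equals $[x,z][y,z]$ because $A$ is abelian. The computation for $[x,yz]$ is symmetric. Next, independence of $\beta$ from $s$ follows because any other section has the form $s'(x) = s(x)a(x)$ with $a(x)\in A$; since $[g,a] = 1$ whenever $a\in A$, bimultiplicativity of commutators in $G$ collapses $[s'(x), s'(y)]$ to $[s(x),s(y)]$. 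Finally, bimultiplicativity of $\beta$ follows from the cocycle relation $s(xy) = s(x)s(y)c(x,y)$ with $c(x,y)\in A$: substituting into $\beta(xy,z)$ and applying bimultiplicativity of commutators yields $\beta(x,z)\beta(y,z)$ after the factor $[c(x,y), s(z)]=1$ is discarded, and analogously in the second variable.

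The only real obstacle is bookkeeping, namely keeping careful track of where the central $A$-valued factors sit and invoking centrality at the right moment. There is no hidden difficulty: once the guiding principle $[G,G]\subseteq A\subseteq Z(G)$ is recorded, each of the three claims—bimultiplicativity of commutators, independence of section, bimultiplicativity of $\beta$—reduces to a two-line manipulation using only the abelianness of $A$ and the centrality of $A$ in $G$.
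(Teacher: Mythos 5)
Your proposal is correct and follows essentially the same route as the paper: observe $[G,G]\subseteq A\subseteq Z(G)$, compute $[xy,z]$ by inserting a central commutator to obtain bimultiplicativity, and then let the claims about $\beta$ follow formally. The paper leaves the last two claims as "follows from this," and your write-up simply spells out that deduction (via $s'(x)=s(x)a(x)$ and the cocycle relation), which is exactly what the paper intends.
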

\begin{proof}
Since $[G,G]\subseteq A\subseteq Z(G)$ we have
\[[ab,x]=abxb^{-1}a^{-1}x^{-1}=a(bxb^{-1}x^{-1})xa^{-1}x^{-1}=[a,x][b,x].\]
Similarly one checks $[x,ab]=[x,a][x,b]$, i.e.\ commutators are bimultiplicative.
The statement about $\beta$ follows from this.
\end{proof}
We next look at the meaning of the condition that $\pi^{-1}\gen{x}$ is cyclic for $x\in B$.
For $x=1$ it means $A$ is cyclic, and for $x\neq 1$ one has:
\begin{lem}\label{lem:pi-inv}
Assume $A$ is cyclic.
For $x\in B,x\neq 1$,
$\pi^{-1}\gen{x}$ is cyclic if and only if $A$ is trivial or generated by $s(x)^{\ord{x}}$.
\end{lem}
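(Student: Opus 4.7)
The natural approach is to analyze the preimage $H := \pi^{-1}\langle x\rangle$ directly. Being a central extension
\[ 1 \to A \to H \to \langle x\rangle \to 1 \]
with cyclic quotient, the group $H$ is abelian: any two elements can be written as $\alpha\, s(x)^i$ and $\beta\, s(x)^j$ with $\alpha,\beta\in A\subseteq Z(G)$, and these visibly commute. Since $H$ is generated by $A$ together with the single element $s(x)$, the lemma reduces to a question about when this two-generator abelian group is in fact cyclic.

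The $(\Leftarrow)$ direction is essentially formal. If $A$ is trivial, then $H = \langle s(x)\rangle$ because $s(x)$ surjects onto the generator $x$ of $H/A$. If $A = \langle s(x)^{\ord x}\rangle$, then $A\subseteq\langle s(x)\rangle$, so $H = \langle A, s(x)\rangle = \langle s(x)\rangle$. Either way $H$ is cyclic.

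For the $(\Rightarrow)$ direction, assume $H$ is cyclic and $A$ is nontrivial. Set $m := |A|$ and $n := \ord x$, so $|H| = mn$. In a cyclic group of order $mn$, the unique subgroup of order $m$ is the subgroup $H^n$ of $n$-th powers; thus $A = H^n$. It therefore suffices to show that $s(x)$ is a generator of $H$, because then $s(x)^n$ generates $H^n = A$. The crucial point is that $\pi(s(x)) = x$ has the maximal possible order $n$ in the cyclic quotient $H/A$, so $x$ lies outside every proper subgroup $(H/A)^d$ with $d>1$; in the $p$-group setting relevant to Proposition \ref{rank-element.lem}, this specializes to $s(x)\notin H^p$, which in a cyclic $p$-group is exactly the condition that $s(x)$ be a generator.

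The main obstacle is the $(\Rightarrow)$ direction. The element $s(x)^{\ord x}\in A$ is not intrinsic to $H$, since different sections produce different values in $A$, so one cannot hope to recover it directly from the cyclic structure of $H$. The argument circumvents this by identifying $A$ intrinsically as $H^n$ and then using the maximality of the order of $\pi(s(x))$ in $H/A$ to force $s(x)$ itself to generate $H$, from which $s(x)^{\ord x}$ automatically generates $A$.
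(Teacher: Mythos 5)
Your proof is correct, and in fact the paper provides no proof of this lemma at all---it is stated as an elementary observation and later invoked inside the proof of Lemma~\ref{lem:gamma-hom} with the single phrase ``(i) is Lemma~\ref{lem:pi-inv}.'' So there is no paper argument to compare against; yours appears to be the intended one. Your observation that $H:=\pi^{-1}\langle x\rangle$ is abelian and generated by $A$ together with $s(x)$, and your $(\Leftarrow)$ direction, are fine in full generality.

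The restriction ``in the $p$-group setting'' that you slip into the $(\Rightarrow)$ direction is, however, not merely a convenience: it is a logical necessity, and the lemma as literally stated (central extension of arbitrary abelian groups, arbitrary section $s$) is false. Take $G=\mathbb{Z}/6\mathbb{Z}$, $A=3\mathbb{Z}/6\mathbb{Z}\cong C_2$, $B=G/A\cong C_3$, $x$ a generator of $B$, and the section with $s(x)=4$. Then $\pi^{-1}\langle x\rangle=G$ is cyclic, yet $s(x)^{\ord x}=3\cdot 4\equiv 0\pmod 6$ does not generate $A$. The failure is precisely at the step you flag as crucial: writing $s(x)=g^k$ for a generator $g$ of $H$, the fact that $\pi(s(x))$ generates $H/A$ gives $\gcd(k,\ord x)=1$, but this forces $\gcd(k,|H|)=1$ (i.e.\ $s(x)$ generates $H$) only when $|A|$ and $\ord x$ are powers of the same prime. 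The paper imposes the $p$-group hypothesis one sentence after the lemma and applies it only with $A$, $B$ nontrivial $p$-groups and $x\in B[p]$, so your proof, scoped as you scope it, covers exactly what is used; it would be cleaner to state the $p$-group hypothesis in the lemma itself.
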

In order to prove Proposition \ref{rank-element.lem} we now assume
$A,B$ are non-trivial $p$-groups and $A$ is cyclic.
The map
\[\gamma:B[p]\to A/A^p, x\mapsto s(x)^p\]
is obviously independent of the choice of $s$.
\begin{lem}\label{lem:gamma-hom}
Assume $A,B$ are non-trivial $p$-groups and $A$ is cyclic.
  \begin{enumerate}
  \item For $x\in B[p], x\neq 1$, $\pi^{-1}\gen{x}$ is cyclic if and only if $\gamma(x)\neq 1$.
  \item If $p$ is odd then $\gamma$ is a homomorphism.
  \item If $p=2$ then $\gamma$ is a homomorphism if and only if $\beta(x,y)\in A^2$ for all $x,y\in B[2]$.
  \end{enumerate}
\end{lem}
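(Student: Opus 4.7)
The plan is to combine Lemma \ref{lem:pi-inv} with the standard commutator identity for class-$2$ nilpotent groups.

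For (i), observe that $x \in B[p]$ with $x \neq 1$ has order exactly $p$. Since $A$ is non-trivial, Lemma \ref{lem:pi-inv} gives that $\pi^{-1}\langle x \rangle$ is cyclic if and only if $A = \langle s(x)^p \rangle$. In a cyclic $p$-group, an element generates the group iff it lies outside the Frattini subgroup $A^p$; hence $\pi^{-1}\langle x \rangle$ is cyclic iff $s(x)^p \notin A^p$, i.e.\ iff $\gamma(x) \neq 1$ in $A/A^p$.

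For (ii) and (iii), fix $x, y \in B[p]$ and set $u := s(x)$, $v := s(y)$. Centrality of the extension makes $[u,v] \in A$ central in $G$, so $G$ has nilpotency class at most $2$. Pushing all $u$'s to the left in $(uv)^p$ via $vu = [v,u]\, uv$, with each of the $\binom{p}{2}$ crossings contributing a factor of the central element $[v,u]$, gives the standard identity
\[(uv)^p = [v,u]^{p(p-1)/2}\, u^p v^p.\]
Moreover $s(xy)$ differs from $s(x)s(y)$ by some element of $A \subseteq Z(G)$ whose $p$-th power lies in $A^p$, so $s(xy)^p \equiv (uv)^p \pmod{A^p}$. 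Recalling $\beta(y,x) = [v,u]$, this yields
\[\gamma(xy) = \beta(y,x)^{p(p-1)/2}\, \gamma(x)\, \gamma(y) \quad \text{in } A/A^p.\]

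For odd $p$, the exponent $p(p-1)/2$ is a multiple of $p$, so $\beta(y,x)^{p(p-1)/2} \in A^p$ and $\gamma$ is a homomorphism, proving (ii). For $p = 2$ the exponent equals $1$, hence $\gamma$ is a homomorphism iff $\beta(y,x) \in A^2$ for every $x, y \in B[2]$; since $\beta$ is bimultiplicative (Lemma \ref{lem:bimult}) and alternating (expanding $1 = \beta(xy,xy)$ gives $\beta(y,x) = \beta(x,y)^{-1}$), the condition is symmetric in $x,y$ and yields (iii). The only substantive step is the commutator identity above, which is routine given centrality of $[u,v]$; no real obstacle is expected.
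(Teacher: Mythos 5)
Your proof is correct and follows essentially the same route as the paper: reduce (i) to Lemma \ref{lem:pi-inv} via the Frattini characterization of generators of a cyclic $p$-group, then apply the class-$2$ power identity $(uv)^p=[v,u]^{p(p-1)/2}u^pv^p$ together with $s(xy)^p\equiv(s(x)s(y))^p\pmod{A^p}$ to get $\gamma(xy)=\gamma(x)\gamma(y)\beta(\cdot,\cdot)^{p(p-1)/2}$ in $A/A^p$, and read off (ii) and (iii). You actually supply slightly more detail than the paper (spelling out (i), and noting the skew-symmetry of $\beta$ to reconcile $\beta(y,x)$ with $\beta(x,y)$ in (iii)); the paper's version of (ii) instead remarks $\beta(x,y)\in A[p]$ so the term literally vanishes, but both observations are immediate and the arguments are the same in substance.
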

\begin{proof}
(i) is Lemma \ref{lem:pi-inv}.
Since $[G,G]\subseteq A\subseteq Z(G)$, we have $(s(x)s(y))^p=s(x)^ps(y)^p\beta(x,y)^{\frac{p(p-1)}{2}}$ for all $x,y\in B$.
In particular, for all $x,y\in B[p]$:
\[\gamma(xy)=s(xy)^pA^p=(s(x)s(y))^pA^p=\gamma(x)\gamma(y)\beta(x,y)^{\frac{p(p-1)}{2}}.\]
Thus $\gamma$ is a homomorphism if and only if $\beta(x,y)^{\frac{p(p-1)}{2}}\in A^p$.
If $p=2$ then $\frac{p(p-1)}{2}=1$, proving (iii).
For $p$ odd we use Lemma \ref{lem:bimult} to see that $\beta(x,y)\in A[p]$ for all $x,y\in B[p]$,
so the term $\beta(x,y)^{\frac{p(p-1)}{2}}$ vanishes.
\end{proof}
\begin{proof}[Proof of Proposition \ref{rank-element.lem}]
Let $A,B$ be non-trivial abelian $p$-groups, $A$ cyclic and $B$ non-cyclic.
By hypothesis and (i) of Lemma \ref{lem:gamma-hom}, $\gamma(x)\neq 1$ for all $x\in B[p],x\neq 1$.
Therefore, if $\gamma$ is a homomorphism then it is injective, in contradiction to $A$ being cyclic and $B$ non-cyclic.
Hence, $\gamma$ is not a homomorphism.
By Lemma (ii) and (iii) of \ref{lem:gamma-hom}, we have $p=2$ and an element in $A[2]\setminus A^2$.
This implies $|A|=2$.
\end{proof}

\section{Examples}\label{examples.sec}

Suppose we are in the setup described in \S\ref{setup.sec}.
For $p\in\Ps$, let $p^{s_p(M)}$ denote the number of $p$-power roots of unity in $M$.

If $\alpha+\chi$ has index equal to $|\chi|$ then the division algebra
contained in $\alpha+\chi$ is a crossed product,
because $\alpha$ is split by $K(\chi)$.
Therefore, noncrossed products of index $p^2$ are possible only if $|\chi|=p$,
in particular only if $\chi$ is cyclic.

Suppose $\chi$ is non-cyclic with $|\chi|=p^2$.
If $b_p(\chi)=0$ then the fiber over $\chi$ contains infinitely many noncrossed products of index $p|\chi|$.
By Theorem \ref{thm:finitness}, this happens, e.g., whenever $s_p(M)=0$.
The present section gives examples of bicyclic $\chi$ with $|\chi|=p^2$ and $b_p(\chi)=0$ but $s_p(M)\geq 1$.
We point out that such a phenomenon is in contrast to the case of cyclic $\chi$ for which one always has $b_p(\chi)\geq s_p(M)$
(see \cite{hanke-sonn:location}).

For $p=2$ an example as described above was given over $K=\mQ$ in \cite{hanke:expl-ex}
and over $K=\F_q(t)$ for all $q\equiv 3\pmod{8}$ in \cite[Example 2.8]{Coyette}.
These turn out to be special cases of our Examples \ref{ex:2} and \ref{ex:Fq} below.

We start with $K=\Q$ and $p=2$:
\begin{ex}\label{ex:2} Let $q,l$ be odd primes such that  $q\equiv 3\pmod{4}, q\not\equiv -l\pmod{8}$
and $q$ a non-square modulo $l$.
Note that for any odd prime $l$ a suitable $q$ can be chosen using Dirichlet's theorem.
\footnote{Using the reciprocity law, it is also possible to choose a suitable $l$ for any prime $q\equiv 3\pmod{4}$.}
%
Let $\mQ(\chi)=\mQ(\sqrt{q},\sqrt{-l})$
Corollary \ref{cor:bpn} and the following claim show $b_p(\chi)=0$.
\end{ex}
\begin{claim}
The extension $M/\Q$ has no isolated primes and there is no $2$-cover $L$ of $M/K$ with local degree $[L:M]_{l}=2$. \end{claim}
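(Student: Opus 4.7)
The plan divides the claim into showing $M/\Q$ has no isolated primes and showing no $2$-cover has $[L:M]_l=2$. For the first part I would go through the possibly ramified rational primes $2,q,l$, compute each local degree $[M:\Q]_v$ using the hypotheses $\left(\tfrac{q}{l}\right)=-1$, $q\equiv 3\pmod 4$, and $q\not\equiv-l\pmod 8$, and verify that in every allowed subcase of $(q,l)\bmod 8$ at least two rational primes achieve the maximum local degree $4$. The prime $l$ is always one such (inert in $\Q(\sqrt q)$, ramified in $\Q(\sqrt{-l})$), and either $q$ (when $l\equiv 3\pmod 4$, via quadratic reciprocity so that $q$ is inert in $\Q(\sqrt{-l})$) or $2$ (when $l\equiv 1\pmod 4$, using $q\not\equiv-l\pmod 8$ to ensure $\sqrt q$ and $\sqrt{-l}$ define distinct quadratic extensions of $\Q_2$) is a second such prime. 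Since $[M:\Q]=4$ has no odd prime factor, there are no $p$-isolated primes for odd $p$.

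For the cover statement, suppose $L/M$ is quadratic with $L/\Q$ Galois and $[L:M]_l=2$. Then $[L:\Q]_l=8=[L:\Q]$, so $L_{\mathfrak L}/\Q_l$ is Galois of degree $8$ and tame (since $l$ is odd). I would classify the possible $G=\Gal(L/\Q)$ as tame local Galois groups at $l$ that surject onto $(\Z/2)^2$: such $G$ is metacyclic with cyclic inertia, and the constraint $M\subset L$ restricts the inertia-residue pair to $(e,f)\in\{(2,4),(4,2)\}$. Since Frobenius acts on inertia by the $l$th-power map, the only possibilities are $G\cong\Z/2\times\Z/4$, or, when $l\equiv 3\pmod 4$, $G\cong D_4$; the groups $(\Z/2)^3$ and $Q_8$ are excluded because they are not tame local Galois groups over $\Q_l$.

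To rule out $G\cong\Z/2\times\Z/4$ I would use that any cyclic quartic extension of $\Q$ contains a unique quadratic subfield which is real (since complex conjugation, if nontrivial, is the unique involution in $\Z/4$) and a sum of two rational squares. Among the three candidate quadratic subfields of $M$, the imaginary $\Q(\sqrt{-l})$ and $\Q(\sqrt{-ql})$ are excluded by sign, and $\Q(\sqrt q)$ is excluded by $q\equiv 3\pmod 4$ (whence $q$ is not a sum of two squares).

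The remaining case $G\cong D_4$ is the hardest. I would split according to the quadratic subfield $F=L^{\langle r\rangle}\subset M$ fixed by the unique cyclic $\Z/4$-subgroup of $D_4$. When $F=\Q(\sqrt q)$ is real, complex conjugation $c\in\Gal(L/\Q)$ fixes $F$ and so lies in $\Gal(L/F)\cong\Z/4$; as an involution it must be the central order-$2$ element, which acts trivially on $M$, contradicting that $c$ negates $\sqrt{-l}\in M$. For the imaginary cases $F=\Q(\sqrt{-l})$ and $F=\Q(\sqrt{-ql})$, I would invoke the embedding-problem obstruction, pulling back the corresponding central-extension class of $1\to\Z/2\to D_4\to(\Z/2)^2\to 1$ in $H^2((\Z/2)^2,\Z/2)$ via the Kummer characters of $q$ and $-l$ to the Brauer classes $(q,l)$ and $(q,-l)$ in $\Br(\Q)[2]$ respectively; each is detected at $l$ by the Hilbert symbol $\left(\tfrac{q}{l}\right)=-1$, so neither can vanish. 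The most delicate bookkeeping is matching the three $D_4$-cocycle classes in $H^2((\Z/2)^2,\Z/2)$ to the three choices of $F$ before computing the local Hilbert-symbol invariants.
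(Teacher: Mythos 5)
Your first part (no isolated primes) matches the paper's case analysis by $l\bmod 4$ and is fine. For the second part you take a genuinely different route from the paper: you classify the possible global Galois groups $G=\Gal(L/\Q)$ of a putative $2$-cover (using tameness at $l$ to restrict to metacyclic candidates), and then rule each out by archimedean signs, sums of two squares, or embedding-problem obstructions. The paper instead avoids any classification of $G$: it observes that the inertia field of $l$ in $L/\Q$ contains $K_1=\Q(\sqrt q)$ and is cyclic over $\Q$, shows it must equal $K_1$ (since $K_1/\Q$ has no cyclic $2$-cover because $q\equiv 3\pmod 4$ blocks a cyclic degree-$4$ extension of $\Q_q$ containing a ramified quadratic), and concludes that $\Gal(L/K_1)\cong\Z/4$, which contradicts that $K_1$ is real while $M$ is imaginary. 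That inertia-field argument is uniform over all group types and is considerably shorter than your case-by-case treatment.

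There is a genuine gap in your classification: the assertion that $Q_8$ is not a tame local Galois group over $\Q_l$ is false. $Q_8$ \emph{is} metacyclic, and with inertia $I=\langle i\rangle$ of order $4$ and Frobenius $j$ acting by $jij^{-1}=i^{-1}=i^3$, it is realized as a tame Galois group over $\Q_l$ precisely when $l\equiv 3\pmod 4$. Since your hypotheses allow $l\equiv 3\pmod 4$ (e.g.\ $(l,q)=(3,11)$), you have not excluded the case $G\cong Q_8$. One can of course try to close the gap with a further embedding-problem computation for $Q_8$ over $\Q(\sqrt q,\sqrt{-l})$, but the paper's argument sidesteps this entirely: whatever $G$ is, the inertia field $L^I$ is a cyclic extension of $\Q$ containing $K_1$, hence equals $K_1$; this forces $I=\Gal(L/K_1)$ cyclic of order $4$, and then the unique order-$2$ subgroup of $\Z/4$ is the complex conjugation, whose fixed field would be the real quadratic $M$ over $K_1$ --- impossible since $M$ is imaginary. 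In particular, the $Q_8$ case is covered by the same two lines. You should either add a correct exclusion of $Q_8$ or replace the group classification by the inertia-field argument.
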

\begin{proof}
Set $K_1:=\Q(\sqrt{q}), K_2:=\Q(\sqrt{-l})$ and $M:=K(\chi)$.

We first check that $M/\mQ$ has no isolated primes.
The prime $l$ ramifies in $K_2$ and is inert in $K_1$, so $[M:\Q]_l=4$.
{\em Case} $l\equiv 3\pmod{4}$:
By reciprocity, $l$ is a square modulo $q$, hence $-l$ is a non-square
modulo $l$.
The prime $q$ thus ramifies in $K_q$ and is inert in $K_2$, so $[M:\Q]_q=4$.
{\em Case} $l\equiv 1\pmod{4}$:
Since $q\not\equiv -l\pmod{8}$, we have $\Q_2(\sqrt{q})\neq\Q_2(\sqrt{-l})$, so $[M:\Q]_2=4$.
In any case, $M/\Q$ has no isolated prime.

Now assume $L$ is a $2$-cover of $M/\Q$ with full local degree at $l$.
Since $K_1$ is real and $M$ is not, $M/K_1$ does not have a cyclic $2$-cover.
Since $q\equiv 3\pmod{4}$, $-1$ is not a square in $\Q_q$.
This implies that $\Q_q$ does not have any totally ramified
degree $4$ extension,
so that any ramified quadratic extension of $\Q_q$ cannot
have a cyclic $2$-cover.
Thus, globally, $K_1/\Q$ does not have a cyclic $2$-cover.
The inertia field of $l$ in $L/\Q$ contains $K_1$ and is cyclic
over $\Q$, thus is equal to $K_1$.
This is a contradiction because $L$ is then a cyclic $2$-cover of
$M/K_1$.
\end{proof}

\begin{rem*}
(i) Suppose $\mQ(\chi)/\Q$ is as in Example \ref{ex:2}.
    Consider $\alpha\in\Br(\Q)$ such that $\ind\alpha=8$ and $\ind\alpha^{\Q(\chi)}=2$.
    Since $l$ is not $2$-isolated in $\mQ(\chi)$, we can find such an
    $\alpha$ with $\ind_l\alpha=8$.
    Since $\mQ(\chi)/\mQ$ does not have a $2$-cover $L$ with $[L:\mQ(\chi)]_l=2$,
    no $2$-cover of $M/K$ splits $\alpha$.
    Hence, the underlying division algebra of $\alpha+\chi$ is a
    noncrossed product of index $8$.\\
(ii) In Example \ref{ex:2} we can choose
    $(l,q)=(3,11),(5,7),(7,3),$ 
    etc.
    The example in \cite{hanke:expl-ex} is the case $l=7$ and $q=3$.\\
\end{rem*}
We now turn to arbitrary global fields $K$ and a prime $p\neq\charak K$.
Example \ref{ex:2} does not generalize immediately because its proof uses a real prime.
The following argument uses a third finite prime instead of a real prime:
\begin{prop}\label{ex:3}
Let $K$ be a global field and let $p\in\Ps$ with $p\neq\charak K$.
Assume $s:=s_p(K)>0$.
Let $\pp$ be any prime of $K$ with $p\ndivides N(\pp)$.
There exists a bicyclic extension $M/K$ with group
  $C_{p^s}\times C_{p^s}$ and without isolated primes such that no
  $p$-cover $L$ of $M/K$ has $[L:M]_{\pp}=p$.
\end{prop}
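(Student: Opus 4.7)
The plan is to imitate Example \ref{ex:2}, using two finite auxiliary primes $\pp_3,\pp_4$ of $K$ in place of the ramified prime $q$ and the real archimedean place used there. Since $s_p(K)=s$, the extension $K(\mu_{p^{s+1}})/K$ is nontrivial, and Chebotarev provides infinitely many primes $\pp'$ of $K$ with $p\nmid N(\pp')$ and $\mu_{p^{s+1}}\not\subseteq K_{\pp'}$; I pick two such primes $\pp_3,\pp_4\neq\pp$, and a further prime $\pp_5\neq\pp,\pp_3,\pp_4$ with $p\nmid N(\pp_5)$, included only to prevent $\pp$ from being isolated. Weak approximation together with Kummer theory (available since $\mu_{p^s}\subseteq K$) then furnishes $a,b\in K^\times$ with prescribed local behavior: at $\pp$ and at $\pp_5$, $a$ is a unit whose reduction generates the unramified degree-$p^s$ extension of the residue field and $b$ is a uniformizer; at $\pp_3$, $a$ is a uniformizer and $b$ a $p^s$-th power; at $\pp_4$ these roles are swapped. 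Set $K_1:=K(\sqrt[p^s]{a})$, $K_2:=K(\sqrt[p^s]{b})$ and $M:=K_1K_2$. The data at $\pp_3$ (where $K_1$ is totally ramified but $K_2$ splits) force $K_1\cap K_2=K$, so $\Gal(M/K)\cong C_{p^s}\times C_{p^s}$; and full local degree $p^{2s}$ at both $\pp$ and $\pp_5$ ensures that $M/K$ has no $p$-isolated (hence no isolated) primes.

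Next, suppose for contradiction that $L$ is a $p$-cover of $M/K$ with $[L:M]_\pp=p$. Then $[L:K]_\pp=p^{2s+1}=[L:K]$, so there is a unique prime $\PP$ of $L$ above $\pp$ and the decomposition group $D_\PP$ equals $\Gal(L/K)$. As $L/K$ is tame at $\pp$, the inertia $I\trianglelefteq\Gal(L/K)$ is cyclic with cyclic quotient. Since $K_1$ is the inertia subfield of $M/K$ at $\pp$ and $I\subseteq\Gal(L/K_1)$, a short analysis gives $|I|\in\{p^s,p^{s+1}\}$: if $|I|=p^s$, then $L^I/K$ is cyclic of degree $p^{s+1}$ containing $K_1$, a cyclic $p$-cover of $K_1/K$; if $|I|=p^{s+1}$, then $L^I=K_1$ and $L/K_1$ is cyclic of degree $p^{s+1}$ containing $M$, a cyclic $p$-cover of $M/K_1$. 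The two alternatives will be ruled out by symmetric arguments at $\pp_3$ and $\pp_4$.

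For the first alternative, let $L'=L^I$ and set $d:=[L':K]_{\pp_3}$. Since the completion of $L'$ at $\pp_3$ contains $(K_1)_{\pp_3'}$, which is totally ramified of degree $p^s$, one has $d\in\{p^s,p^{s+1}\}$. If $d=p^{s+1}$, the completion is cyclic of degree $p^{s+1}$ over $K_{\pp_3}$ with $(e,f)\in\{(p^{s+1},1),(p^s,p)\}$; the first case is ruled out because a tame totally ramified cyclic $p^{s+1}$-extension of $K_{\pp_3}$ requires $\mu_{p^{s+1}}\subseteq K_{\pp_3}$, and in the second the unique degree-$p^s$ subfield has residue degree $p$, contradicting that $(K_1)_{\pp_3'}$ is totally ramified. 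If $d=p^s$, the decomposition field of $\pp_3$ in $L'/K$ is the unique degree-$p$ subfield of $L'$, which lies inside $K_1$ and is therefore totally ramified at $\pp_3$, contradicting that $\pp_3$ splits completely in the decomposition field. The second alternative is handled identically with $(K_1,M,\pp_4')$ in place of $(K,K_1,\pp_3)$: because $\pp_4$ splits completely in $K_1$, $(K_1)_{\pp_4'}=K_{\pp_4}$ and so $\mu_{p^{s+1}}\not\subseteq (K_1)_{\pp_4'}$, while $M/K_1$ is totally ramified of degree $p^s$ at primes above $\pp_4$.

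The main obstacle lies in the combined local setup of the first step: arranging $a,b$ so that all the local prescriptions at $\pp,\pp_3,\pp_4,\pp_5$ are realized simultaneously, while also ensuring $\Gal(M/K)$ is the full $C_{p^s}\times C_{p^s}$ and that no isolated primes appear. Once the setup is correctly designed, the cyclic-subfield and decomposition-field arguments at $\pp_3$ and $\pp_4$ proceed cleanly and exactly mirror the obstruction structure from Example \ref{ex:2}.
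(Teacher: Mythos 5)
Your proposal is correct and takes essentially the same approach as the paper: construct $M=K_1K_2$ with prescribed tame local behavior at a few auxiliary primes, and show that a hypothetical $p$-cover $L$ of $M/K$ with $[L:M]_\pp=p$ would produce a cyclic $p$-cover of either $K_1/K$ or $M/K_1$, each of which is obstructed at a tamely ramified auxiliary prime whose residue norm is $\not\equiv 1\pmod{p^{s+1}}$. The paper builds $K_1,K_2$ via Grunwald--Wang rather than Kummer theory plus weak approximation and gets by with one fewer auxiliary prime (its $\fq_1$ is made inert, not split, in $K_2$, so it doubles as the second full-local-degree prime needed to exclude isolated primes), and its local obstruction argument is condensed, but these are minor technical variations of the same argument.
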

\begin{proof}
By Chebotarev's density theorem, there are primes $\fq_1,\fq_2$ of $K$ with $N(\fq_i)\equiv 1\pmod{p^s}$
but $N(\fq_i)\not\equiv 1\pmod{p^{s+1}}$.
By the Grunwald-Wang theorem there are cyclic extensions $K_i/K$ of degree $[K_i:K]=p^s$, such that in $K_1$: $\pp$ is
inert, $\fq_1$ is totally ramified, $\fq_2$ splits completely,
and in $K_2$:  $\fq_1$ is inert and $\pp, \fq_2$ are totally ramified.
Since $\pp$ and $\fq_1$ both have full local degree in $M$, $M/K$ has
no isolated primes.

Since $N(\fq_1)\not\equiv 1$ (mod $p^{s+1}$) and $\fq_1$ is totally ramified in $K_1/K$, $K_1/K$ does not have a cyclic $p$-cover.
Similarly, considering $\fq_2$, $M/K_1$ does not have a cyclic $p$-cover.

Assume on the contrary there is a $p$-cover $L$ of $M/K$ with $[L:M]_{\pp}=p$.  Since the inertia field of $\pp$ contains $K_1$ and is cyclic over $K$, it equals $K_1$. This shows that $L$ is a cyclic $p$-cover of $M/K_1$, contradiction.
\end{proof}
\begin{ex}\label{ex:Fq}
Let $p\in\Ps$ and $K=\F_q(t)$ for $q\equiv 1\pmod{p}$, so that $s:=s_p(K)>0$.
Assume $a\not\in(K^\times)^p$.
Let $K(\chi)=K(\sqrt[p^s]{t},\sqrt[p^s]{(t-1)(t-a)})$.
By the following claim, the proof of Proposition \ref{ex:3} applies to $M=K(\chi)$ and
the primes $\pp=(t-a),\fq_1=(t),\fq_2=(t-1)$.
Therefore, $b_p(\chi)=0$.
\end{ex}
For $q\equiv 3\pmod{4}$ and $p=2$, Example \ref{ex:Fq} is identical with \cite[Example 2.8]{Coyette}.
\begin{claim} Let $K_1=K(\sqrt[p^s]{t}), K_2=K(\sqrt[p^s]{(t-1)(t-a)})$. Then $(t-a)$ is inert in $K_1$ and totally ramified in $K_2$, $(t)$ is totally ramified in $K_1$ and inert in $K_2$, and $(t-1)$ splits completely in $K_1$ and is totally ramified in $K_2$.  
\end{claim}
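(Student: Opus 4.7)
The plan is to verify the six assertions by local Kummer theory, prime by prime. Since $q\equiv 1\pmod{p^s}$ we have $\mu_{p^s}\subset\F_q\subset K$, so both $K_1/K$ and $K_2/K$ are genuine degree-$p^s$ Kummer extensions: the radicals $t$ and $(t-1)(t-a)$ have $(t)$-adic, respectively $(t-1)$-adic, valuation $1$, which forces them to have order exactly $p^s$ in $K^\times/(K^\times)^{p^s}$. The residue field at each of the three primes $(t),(t-1),(t-a)$ is $\F_q$. For a Kummer radical $b$ and a prime $\fp$ of $K$, the local behavior of $K(\sqrt[p^s]{b})/K$ at $\fp$ is controlled by the class of $b$ in $K_\fp^\times/(K_\fp^\times)^{p^s}$: if $\gcd(v_\fp(b),p^s)=1$ then $\fp$ is totally (tamely) ramified, and if $v_\fp(b)=0$ then the local extension is the unramified extension of degree equal to the order of the residue $\bar b\in\F_q^\times$ in $\F_q^\times/(\F_q^\times)^{p^s}$. (Since $p\neq\charak\F_q$, the principal units $1+\mathfrak m$ are uniquely $p^s$-divisible and contribute nothing modulo $p^s$-th powers.)

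The only genuinely non-routine input is the preliminary claim that $a\in\F_q^\times$ has order exactly $p^s$ in the cyclic group $\F_q^\times/(\F_q^\times)^{p^s}$ of order $p^s$. Equivalently, $a$ must map nontrivially to the further quotient $\F_q^\times/(\F_q^\times)^p$ of order $p$, i.e.\ $a\notin(\F_q^\times)^p$. Because an element of $\F_q^\times$ is a $p$-th power in $K=\F_q(t)$ if and only if it is already a $p$-th power in $\F_q^\times$ (compare degrees, or leading coefficients, of any putative $p$-th root in $\F_q(t)$), the hypothesis $a\notin(K^\times)^p$ gives this at once.

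With the preliminary in hand, the six cases follow by direct substitution. The ramified cases are $v_{(t)}(t)=1$ and $v_{(t-a)}((t-1)(t-a))=v_{(t-1)}((t-1)(t-a))=1$, all coprime to $p^s$, whence $(t)$ is totally ramified in $K_1$ and $(t-a),(t-1)$ are totally ramified in $K_2$. For the unramified cases, the residue of $t$ at $(t-a)$ is $a$, and the residue of $(t-1)(t-a)$ at $(t)$ is $(-1)(-a)=a$, both of order $p^s$ by the preliminary, so $(t-a)$ is inert in $K_1$ and $(t)$ is inert in $K_2$. Finally the residue of $t$ at $(t-1)$ equals $1$, a trivial class, so the local extension at $(t-1)$ in $K_1$ is trivial and $(t-1)$ splits completely. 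The main obstacle is simply the bridge from the hypothesis $a\notin(K^\times)^p$ to full $p^s$-order in the residue quotient; once that is in place, the remaining bookkeeping is mechanical.
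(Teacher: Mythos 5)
Your proof is correct and uses the same approach as the paper's: compute the residue of the Kummer radical at each prime and apply local Kummer theory, with the key observation that $a$ is not a $p$-th power in $\F_q$ (since $a\notin(K^\times)^p$), hence has full order $p^s$ in the cyclic group $\F_q^\times/(\F_q^\times)^{p^s}$. Your writeup simply spells out the local Kummer dictionary and the constant-field reduction more explicitly than the paper, which states them implicitly.
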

\begin{proof}

In $K_1$ we have:
$t\equiv a\pmod{t-a}$ is not a $p$-th power
and $t\equiv 1\pmod{t-1}$ is a $p^s$-th power,
hence $(t-a)$ is inert, $(t-1)$ splits completely, and $(t)$ is totally ramified.

In $K_2$ we have:
$(t-1)(t-a)\equiv a\pmod{t}$ is not a $p$-th power,
hence $(t)$ is inert and $(t-1),(t-a)$ are totally ramified.
\end{proof}

\def\cprime{$'$}

\end{document}